\newcommand{\bv}{{\bm v}}
\newcommand{\bu}{{\bm u}}
\newcommand{\bn}{{\bm n}}
\newcommand{\bw}{{\bm w}}
\newcommand{\bt}{{\bm t}}
\newcommand{\pT}{{\partial T}}
\def\l{{\langle}}
\def\r{{\rangle}}
\def\3bar{{|\hspace{-.02in}|\hspace{-.02in}|}}
\renewcommand{\ldots}{\dotsc}
\newcommand{\kernel}{\operatorname{Ker}}
\newcommand{\range}{\operatorname{Range}}
\title{De Rham Complexes for Weak Galerkin Finite Element Spaces}
\author{Chunmei Wang\thanks{Department of Mathematics \& Statistics, Texas Tech University, Lubbock, TX 79409, USA (chunmei.wang@ttu.edu).
The research of Chunmei Wang was partially supported by National Science Foundation Award DMS-1849483.}
\and Junping Wang\thanks{Division of Mathematical Sciences, National Science Foundation, Alexandria, VA 22314
(jwang@nsf.gov). The research of Junping Wang was supported by the NSF IR/D program, while working at National Science Foundation. However, any opinion, finding, and conclusions or recommendations expressed in this material are those of the author and do not necessarily reflect the views of the National Science Foundation.}\and Xiu Ye\thanks{Department of
Mathematics, University of Arkansas at Little Rock, Little Rock, AR
72204 (xxye@ualr.edu). This research was supported in part by
National Science Foundation Grant DMS-1620016.}
\and
Shangyou Zhang\thanks{Department of
Mathematical Sciences, University of Delaware, Newark, DE 19716 (szhang@udel.edu).}
}
\begin{document}

\maketitle

\begin{abstract}
Two de Rham complex sequences of the finite element spaces are introduced for weak finite element functions and weak derivatives developed in the weak Galerkin (WG) finite element methods on general polyhedral elements. One of the sequences uses polynomials of equal order for all the finite element spaces involved in the sequence and the other one uses polynomials of naturally decending orders. It is shown that the diagrams in both de Rham complexes commute for general polyhedral elements. The exactness of one of the complexes is established for the lowest order element.
\end{abstract}

\begin{keywords}
weak Galerkin, finite element methods, de Rham complex,  polyhedral elements
\end{keywords}

\begin{AMS}
Primary: 65N15, 65N30; Secondary: 35J50
\end{AMS}

\section{Introduction}

Finite element exterior calculus is a framework that explores the structural properties of finite element aproximating spaces and their connection with basic differential operators such as gradient, curl, and divergence operators in differential calculus. Such differential operators usually form the basis of mathematical models for physical, engineering, and biological problems.  The classical conforming finite element exterior calculus has provided good guidance in the design and analysis
of various finite element methods for solving partial differential equations (PDE). In particular, the framework has proved to be powerful in analysing well-posedness of finite element discretizations and revealing new finite elements for solving various PDE modeling problems arising from science and engineering. Finite element exterior calculus has been well developed for conforming finite element methods on simplicial elements, and they often appear in the form of de Rham complexes in the application of numerical solutions for PDEs \cite{ay2015,
afw,afw1,boffi2001,boss1998,chh2018,dmvr2000,demk2008,pf2013,hipt1999,ghz2019,neilan2015,tw2006,whit1957}. 

In the last two decades, an extensive research effort has been made by the computational mathematics community in the development of finite element methods with discontinuous approximating functions. The weak Galerkin (WG) finite element method, first introduced in \cite{wy}, is one of the recently developed finite element techniques based on discontinuous finite element functions. The essence of weak Galerkin finite element methods is the use of weak finite element
functions and their weak derivatives defined as discrete distributions in polynomial subspaces. In general, weak Galerkin finite element formulations for partial differential equations can be derived naturally by replacing usual derivatives by discrete weak derivatives in the corresponding variational forms, with the option of adding stabilization term(s) to enforce a weak continuity of the approximating functions. Like most discontinuous finite element methods, WG method is applicable for finite element partitions with arbitrary shape of polygons/polyhedra.

The purpose of this paper is to present two de Rham diagrams for weak Galerkin finite element spaces with weakly defined differential operators. In particular, two finite element differential complexes are developed in the WG context on polyhedral elements. It is proved that the diagrams in the two de Rham complexes commute with simple, yet powerful $L^2$ projection operators from the continuous functional spaces to the corresponding WG finite element ``subspaces".

 {\sc WG de Rham Complex 1: \ Polynomials of Equal Order $k\ge 0$}

\begin{figure}[htb]
\begin{center}
\begin{tikzpicture}[scale=1.4]
 \node (AA) at (-1,1) {${\mathbb R}^1$};
 \node (A) at (0,1) {$C^\infty(T)$};
 \node (B) at (1.7,1) {$[{C}^\infty(T)]^3$};
 \node (C) at (3.4,1) {$[{C}^\infty(T)]^3$};
 \node (D) at (5.1,1) {$C^\infty(T)$};
 \node (I) at (6.0,1) {$0$};
 \node (EE) at (-1,0)   {${\mathbb R}^m$};
 \node (E) at (0,0)   {$V_{k}^{(1)}(T)$};
 \node (F) at (1.7,0) {$V_{k}^{(2)}(T)$};
 \node (G) at (3.4,0) {$V_{k}^{(3)}(T)$};
 \node (H) at (5.1,0) {$V_{k}^{(4)}(T)$};
 \node (II) at (6.0,0) {$0$};
 \draw[->] (AA) to node [above]{$I$}(A);
 \draw[->] (A) to node [above]{$\nabla$}(B);
 \draw[->] (B) to node [above]{$\nabla\times$}(C);
 \draw[->] (C) to node [above]{$\nabla\cdot$}(D);
 \draw[->] (D) to node [above]{$N$}(I);
 \draw[->] (EE) to node [above]{$I_w$}(E);
 \draw[->] (E) to node [above]{$\nabla_w$}(F);
 \draw[->] (F) to node [above]{$\nabla_w\times$}(G);
 \draw[->] (G) to node [above]{$\nabla_w\cdot$}(H);
 \draw[->] (A) to node [left]{$Q_h^{(1)}$}(E);
 \draw[->] (B) to node [left]{$Q_h^{(2)}$}(F);
 \draw[->] (C) to node [left]{$Q_h^{(3)}$}(G);
 \draw[->] (D) to node [left]{$Q_h^{(4)}$}(H);
 \draw[->] (H) to node [above]{$N$}(II);
 \end{tikzpicture}
\end{center}
\caption{ \label{d1} The WG de Rham Complex 1. }
\end{figure}

In the de Rham complex 1 (see Fig. \ref{d1}), the degree of polynomials in all four WG finite element spaces $V_k^{(i)}$, $ i=1,\ldots,4$, are of the same value $k\ge 0$, and $C^\infty(T)$ is the space of infinitely smooth functions in the closed region $T$. This special feature of using polynomials of equal order in all the finite element spaces is possible only for weakly defined differential operators, as the weak derivative of a $k$th order polynomial can be a polynomials of any degree in the WG context. Polynomials in the finite element spaces in the WG de Rham Complex 2 (see Fig. \ref{d2}) are in a naturally descending order. Details about these finite elmenet spaces and the weak differential operators can be found in forthcoming sections.

{\sc WG de Rham Complex 2: \ Polynomials of Descending Order $k\ge 3$}

\begin{figure}[htb]
\begin{center}
\begin{tikzpicture}[scale=1.4]
 \node (AA) at (-1,1) {${\mathbb R}^1$};
 \node (A) at (0,1) {$C^\infty(T)$};
 \node (B) at (1.7,1) {$[{C}^\infty(T)]^3$};
 \node (C) at (3.4,1) {$[{C}^\infty(T)]^3$};
 \node (D) at (5.1,1) {$C^\infty(T)$};
 \node (I) at (6.0,1) {$0$};
 \node (EE) at (-1,0)   {${\mathbb R}^n$};
 \node (E) at (0,0)   {$W_{k}^{(1)}(T)$};
 \node (F) at (1.7,0) {$W_{k-1}^{(2)}(T)$};
 \node (G) at (3.4,0) {$W_{k-2}^{(3)}(T)$};
 \node (H) at (5.1,0) {$W_{k-3}^{(4)}(T)$};
 \node (II) at (6.0,0) {$0$};
 \draw[->] (AA) to node [above]{$I$}(A);
 \draw[->] (A) to node [above]{$\nabla$}(B);
 \draw[->] (B) to node [above]{$\nabla\times$}(C);
 \draw[->] (C) to node [above]{$\nabla\cdot$}(D);
 \draw[->] (D) to node [above]{$N$}(I);
 \draw[->] (EE) to node [above]{$I_w$}(E);
 \draw[->] (E) to node [above]{$\nabla_w$}(F);
 \draw[->] (F) to node [above]{$\nabla_w\times$}(G);
 \draw[->] (G) to node [above]{$\nabla_w\cdot$}(H);
 \draw[->] (A) to node [left]{$R_h^{(1)}$}(E);
 \draw[->] (B) to node [left]{$R_h^{(2)}$}(F);
 \draw[->] (C) to node [left]{$R_h^{(3)}$}(G);
 \draw[->] (D) to node [left]{$R_h^{(4)}$}(H);
 \draw[->] (H) to node [above]{$N$}(II);
 \end{tikzpicture}
\end{center}
\caption{ \label{d2} The WG de Rham Complex 2. }
\end{figure}

The above two WG de Rham complexes have some unique features: (1) they both work for discontinuous approximation on general polyhedral elements; (2) all the cochain projections $Q_h^{(i)}$ in Fig. \ref{d1} and $R_h^{(i)}$ in Fig. \ref{d2} are the standard $L^2$ projections; and (3) weakly defined differential operators are employed first time in finite element differential complexes. It should be noted that the weak gradient $\nabla_w$, weak curl $\nabla_w\times$ and weak divergence $\nabla_w\cdot$ in the WG de Rham complexes 1 and 2 have been employed for solving various partial differential equations in many existing literatures, including \cite{mwyz,ww,wymix}. The WG de Rham complexes 1 and 2 additionally provide discrete weak differential operators defined on surfaces, which is of great interest to the development of numerical methods for PDEs on surfaces or manifolds in general.

\section{WG de Rham Complexes}

Let $T$ be a shape-regular polyhedron in the sense of \cite{wymix}. Denote by $F(T)$, $E(T)$ and $V(T)$ the set of faces, edges, and vertices of $T$, respectively. For any face $f\in F(T)$, let $\bn$ be a unit normal vector to $f$ and $\bt$ be a unit tangential vector on $e\subset\partial f$ which obey the right-hand rule. Throughout this paper, we adopt the notation of $(\cdot,\cdot)_{D}$ for the $L^2$ inner product in $L^2(D)$, where $D$ could be a volume, a surface, or a curve.

\subsection{WG de Rham complex for polynomials of equal order}

For a given non-negative integer $k\ge 0$, we introduce four vector spaces $V_k^{(1)}(T)$, $V_k^{(2)}(T)$, $V_k^{(3)}(T)$ and $V_k^{(4)}(T)$ that allow the operator operations shown as in Fig. \ref{d1}.
The first space $V_k^{(1)}(T)$ is defined by
\begin{eqnarray}
V_k^{(1)}(T)&=&\{v=\{v_0,v_f,v_e, v_n\}: v_0\in P_k(T),\;v_f\in P_{k}(f),\; v_e\in P_{k}(e),\nonumber\\
&&v_n\in \mathbb{R},\;f\in F(T), \; e\in E(T), n\in V(T)\}.\label{V1}
\end{eqnarray}
The second space $V_k^{(2)}(T)$ is given by
\begin{eqnarray}
V_k^{(2)}(T)&=&\{ \bu=\{\bu_0, \bu_f,\bu_e\}:\;\bu_0\in V_{k,0}^{(2)}(T), \bu_f\in V_{k,f}^{(2)}(f),\nonumber\\
&& \bu_e\in V_{k,e}^{(2)}(e),\;f\in F(T),\; e\in E(T)\},\label{V2}
\end{eqnarray}
where
\begin{eqnarray}
V_{k,0}^{(2)}(T)&=&[P_k(T)]^3,\label{V20}\\
V_{k,f}^{(2)}(f)&=&\{\bu_f=u_1\bt_{f,1}+u_2\bt_{f,2}:\; u_1,u_2\in P_k(f)\}, \label{V2f}\\
V_{k,e}^{(2)}(e)&=&\{\bu_e=u\bt_e: u\in P_k(e)\},\label{V2e}
\end{eqnarray}
$\bt_{f,1}$ and $\bt_{f,2}$ in (\ref{V2f}) are two orthogonal unit vectors  tangential to $f$, $\bt_e$ in (\ref{V2e}) is a tangent vector on $e$.
The third space $V_k^{(3)}(T)$ is defined as
\begin{eqnarray}
V_k^{(3)}(T)&=&\{ \bw=\{\bw_0, \bw_f\}: \bw_0 \in V_{k,0}^{(3)}(T),\; \bw_f\in V_{k,f}^{(3)}(f),\ f\in F(T)\},\label{V3}
\end{eqnarray}
where
\begin{eqnarray}
V_{k,0}^{(3)}(T)&=&[P_k(T)]^3,\label{V30}\\
V_{k,f}^{(3)}(f)&=&\{\bw_f=w_f\bn_f:\;  w_f\in P_k(f)\}, \label{V3f}
\end{eqnarray}
with $\bn_f$  being a unit normal vector to the surface $f$. The three unit vectors $\bt_{f,1}$, $\bt_{f,2}$, and $\bn_f$ are assumed to form an orthogonal right-hand system, though this assumption is not necessary.
Our fourth space $V_k^{(4)}(T)$ is given by
\begin{eqnarray}
V_k^{(4)}(T)&=&P_k(T).\label{V4}
\end{eqnarray}

Next we define three weak gradient operators: (1) the regular weak gradient $\nabla_{w,0}$ on volume $T$, (2) the  surface weak gradient $\nabla_{w,f}$ on face $f$, and (3) the edge weak gradient or directional derivative $\nabla_{w,e}$ on $e$. More precisely,
for $v=\{v_0,v_f,v_e, v_n\}\in V_k^{(1)}(T)$, the weak gradient on volume $T$, denoted by $\nabla_{w,0} v\in V_{k,0}^{(2)}(T)$, is defined on $T$ by
\begin{equation}\label{wg0}
(\nabla_{w,0} v, \ \varphi)_T = -( v_0, \ \nabla\cdot\varphi)_T+( v_f,\varphi\cdot\bn)_\pT
\quad
\forall \varphi\in V_{k,0}^{(2)}(T).
\end{equation}
The surface weak gradient on the face $f\in F(T)$, denoted by $\nabla_{w,f} v\in V_{k,f}^{(2)}(f)$, is defined as follows:
\begin{equation}\label{wgf}
( \nabla_{w,f}v,\ \theta\times\bn_f)_{f}=-( v_f, \nabla\times\theta\cdot\bn_f)_f+ ( v_e,\; \theta\cdot\bt_{\partial f})_{\partial f}\quad\forall\theta\in V_{k,f}^{(2)}(f),
\end{equation}
where $\bt_{\partial f}$ is chosen such that $\bt_{\partial f}$ and $\bn_f$ obey the right-hand rule. Analogously, the edge weak gradient or directional derivative on edge $e$, denoted as $\nabla_{w,e} v\in V_{k,e}^{(2)}(e)$, is defined on $e$ such that
\begin{equation}\label{wge}
( \nabla_{w,e}v, \varphi\bt_e)_{e}=-( v_e, \nabla\varphi\cdot\bt_e)_e+ ( v_n,\;\varphi \bt_e\cdot\bn_{\partial e})_{\partial e} \quad\forall\varphi\bt_e\in V_{k,e}^{(2)}(e),
\end{equation}
where $\bn_{\partial e}$ is the unit outward direction at the two end points of $e$. It is clear that $( v_n,\;\varphi \bt_e\cdot\bn_{\partial e})_{\partial e}$ is in fact the  difference of the value of $v_n\varphi$ at  two end points of $e$ with sign determined by $\bt_e$.

With the help of these weak gradient operators, we may define a composite weak gradient operator $\nabla_w: V_k^{(1)}(T)\mapsto V_k^{(2)}(T)$ as follows:
\begin{equation}\label{wg}
\nabla_wv :=\{\nabla_{w,0}v, \nabla_{w,f}v, \nabla_{w,e}v \} \in V_k^{(2)}(T)
\end{equation}
for all $v\in V_k^{(1)}(T)$.

Next, we shall introduce two weak curl operators, denoted as $\nabla_{w,0}\times$ and $\nabla_{w,f}\times$, on $T$ and $f$ respectively.
For any $\bu=\{\bu_0,\bu_f,\bu_e\}\in V_k^{(2)}(T)$, the  weak curl $\nabla_{w,0}\times\bu\in V_{k,0}^{(3)}(T)$ is defined on $T$ by
\begin{equation}\label{wc0}
  (\nabla_{w,0}\times\bu, \theta)_T = (\bu_0, \nabla\times\theta)_T+ ( \bu_f, \theta\times\bn)_{\pT}\qquad
   \forall \theta\in V_{k,0}^{(3)}(T).
\end{equation}
The surface weak curl on each face $f\in F(T)$, denoted as $\nabla_{w,f}\times\bu\in V_{k,f}^{(3)}(f)$, is defined on $f$ satisfying
\begin{equation}\label{wcf}
( \nabla_{w,f}\times\bu, \tau\bn_f)_{f}=( \bu_f, \nabla \tau\times\bn_f)_f+(\bu_e,\; \tau\bt)_{\partial f}\quad\forall\tau\bn_f\in V_{k,f}^{(3)}(f).
\end{equation}
The composite weak curl operator $\nabla_w\times: V_k^{(2)}(T)\mapsto V_k^{(3)}(T)$ is then given by setting
\begin{equation}\label{wc}
\nabla_w\times\bu=\{\nabla_{w,0}\times\bu, \nabla_{w,f}\times\bu\}.
\end{equation}

Finally, for any $\bw=\{\bw_0,\bw_f\}\in V_k^{(3)}(T)$, we define its weak divergence $\nabla_w\cdot\bw\in V_k^{(4)}(T)$ by the following equation:
\begin{equation}\label{wd}
(\nabla_{w}\cdot\bw, \tau)_T = -(\bw_0,\nabla \tau)_T+ (\bw_f,
\tau\bn)_{\partial T}\qquad \forall \tau\in V_k^{(4)}(T).
\end{equation}
It is clear that the weak divergence operator $\nabla_w\cdot$ maps $V_k^{(3)}(T)$ to $V_k^{(4)}(T)$.

\subsection{WG de Rham complex for polynomials of descending order}
For a given integer $k\ge 3$, we introduce four polynomial spaces with descending orders for the diagram in Fig. \ref{d2}: $W_k^{(1)}(T)$, $W_{k-1}^{(2)}(T)$, $W_{k-2}^{(3)}(T)$ and $W_{k-3}^{(4)}(T)$. The first polynomial space $W_k^{(1)}(T)$ is given by
\begin{equation}\label{W1}
\begin{split}
W_k^{(1)}(T)=&\{v=\{v_0,v_f,v_e, v_n\}: v_0\in P_k(T),\;v_f\in P_{k-1}(f),\; v_e\in P_{k-2}(e),\\
&v_n\in \mathbb{R},\;f\in F(T), \; e\in E(T), n\in V(T)\}.
\end{split}
\end{equation}
The second space $W_k^{(2)}(T)$ is given by
\begin{equation}\label{W2}
\begin{split}
W_{k-1}^{(2)}(T)=&\{ \bu=\{\bu_0, \bu_f,\bu_e\}:\;\bu_0\in W_{k-1,0}^{(2)}(T), \bu_f\in W_{k-2,f}^{(2)}(f),\\
&\bu_e\in W_{k-3,e}^{(2)}(e),\;f\in F(T),\; e\in E(T)\},
\end{split}
\end{equation}
where
\begin{eqnarray}
W_{k-1,0}^{(2)}(T)&=&[P_{k-1}(T)]^3,\label{W20}\\
W_{k-2,f}^{(2)}(f)&=&\{\bu_f=u_1\bt_{f,1}+u_2\bt_{f,2}:\; u_1,u_2\in P_{k-2}(f)\}, \label{W2f}\\
W_{k-3,e}^{(2)}(e)&=&\{\bu_e=u\bt_e: u\in P_{k-3}(e)\}.\label{W2e}
\end{eqnarray}
The third space $W_k^{(3)}(T)$ is defined as
\begin{eqnarray}
W_k^{(3)}(T)&=&\{ \bw=\{\bw_0, \bw_f\}: \bw_0 \in W_{k-2,0}^{(3)}(T),\; \bw_f\in W_{k-3,f}^{(3)}(f)\},\label{W3}
\end{eqnarray}
where
\begin{eqnarray}
W_{k-2,0}^{(3)}(T)&=&[P_{k-2}(T)]^3,\label{W30}\\
W_{k-3,f}^{(3)}(f)&=&\{\bw_f=w_f\bn_f:\;  w_f\in P_{k-3}(f)\}, \label{W3f}
\end{eqnarray}
with $\bn_f$ a unit normal vector to the face $f$. The fourth polynomial space $W_k^{(4)}(T)$ is defined as follows:
\begin{eqnarray}
W_{k-3}^{(4)}(T)&=&P_{k-3}(T).\label{W4}
\end{eqnarray}

Analogously,  we define three weak gradient operators $\nabla_{w,0}$, $\nabla_{w,f}$ and  $\nabla_{w,e}$ on $T$, $f$ and $e$ accordingly. More precisely,
for $v=\{v_0,v_f,v_e, v_n\}\in W_k^{(1)}(T)$, the weak gradient operator on volume $T$, denoted as $\nabla_{w,0} v\in W_{k-1,0}^{(2)}(T)$, is defined by
\begin{equation}\label{wg10}
(\nabla_{w,0} v, \ \varphi)_T = -( v_0, \ \nabla\cdot\varphi)_T+( v_f,\varphi\cdot\bn)_\pT
\quad
\forall \varphi\in W_{k-1,0}^{(2)}(T).
\end{equation}
The surface weak gradient operator on face $f\in F(T)$, denoted as $\nabla_{w,f} v\in W_{k-2,f}^{(2)}(f)$, is defined on $f$ satisfying
\begin{equation}\label{wg1f}
( \nabla_{w,f}v,\ \theta\times\bn_f)_{f}=-(v_f, \nabla\times\theta\cdot\bn_f)_f+( v_e,\; \theta\cdot\bt_{\partial f})_{\partial f}\quad\forall\theta\in W_{k-2,f}^{(2)}(f).
\end{equation}
The edge weak gradient or directional derivative on edge $e\in E(T)$, denoted as $\nabla_{w,e} v\in W_{k-3,e}^{(2)}(e)$, is defined on $e$ such that
\begin{equation}\label{wg1e}
( \nabla_{w,e}v, \varphi\bt_e)_{e}=-( v_e, \nabla\varphi\cdot\bt_e)_e+ ( v_n,\;\varphi \bt_e\cdot\bn_{\partial e})_{\partial e} \quad\forall\varphi\bt_e\in W_{k-3,e}^{(2)}(e).
\end{equation}
Collectively, we define a weak gradient mapping $\nabla_w: W_k^{(1)}(T)\mapsto W_{k-1}^{(2)}(T)$ as follows:
\begin{equation}\label{wg1}
\nabla_w v :=\{\nabla_{w,0}v, \nabla_{w,f}v, \nabla_{w,e}v \}.
\end{equation}

Next, we  define two weak curl operators $\nabla_{w,0}\times$ and $\nabla_{w,f}\times$ on $T$ and $f$ respectively.
For any $\bu=\{\bu_0,\bu_f\}\in W_{k-1}^{(2)}(T)$, the weak curl on volume $T$, denoted as $\nabla_{w,0}\times\bu\in W_{k-2,0}^{(3)}(T)$, is defined on $T$ by
\begin{equation}\label{wc10}
  (\nabla_{w,0}\times\bu, \theta)_T = (\bu_0, \nabla\times\theta)_T+( \bu_f, \theta\times\bn)_{\pT}\qquad
   \forall \theta\in W_{k-2,0}^{(3)}(T).
\end{equation}
The surface weak curl on face $f\in F(T)$, denoted as $\nabla_{w,f}\times\bu\in W_{k-3,f}^{(3)}(f)$, is defined on $f$ by
\begin{equation}\label{wc1f}
( \nabla_{w,f}\times\bu, \tau\bn_f)_{f}=( \bu_f, \nabla \tau\times\bn_f)_f+\l\bu_e,\; \tau\bt\r_{\partial f}\quad\forall\tau\bn\in W_{k-3,f}^{(3)}(f).
\end{equation}
The composite weak curl mapping $\nabla_w\times: W_{k-1}^{(2)}(T)\mapsto W_{k-2}^{(3)}(T)$ is then defined by
\begin{equation}\label{wc1}
\nabla_w\times\bu :=\{\nabla_{w,0}\times\bu, \nabla_{w,f}\times\bu\}.
\end{equation}

Finally, the weak divergence $\nabla_w\cdot\bw\in W_{k-3}^{(4)}(T)$ for any $\bw=\{\bw_0,\bw_f\}\in W_{k-2}^{(3)}$ is defined on $T$ by the following equation
\begin{equation}\label{wd1}
(\nabla_{w}\cdot\bw, \tau)_T = -(\bw_0,\nabla \tau)_T+ (\bw_f,
\tau\bn)_{\partial T}\qquad \forall \tau\in W_{k-3}^4(T).
\end{equation}

\section{Complex Sequences}

The goal of this section is to show that the WG sequences in Fig. \ref{d1} and Fig. \ref{d2} are complexes in the sense that the composition of any two consecutive operations is zero.

\begin{lemma}
For the weak gradient operator $\nabla_w$ and the weak curl operator $\nabla_w\times$ defined in (\ref{wg}) and (\ref{wc}) or in (\ref{wg1})  and (\ref{wc1}), the following identity holds true
\begin{equation}\label{d2d1}
\nabla_w\times (\nabla_wv)=0
\end{equation}
for all $v$ in $V_k^{(1)}(T)$ or $W_k^{(1)}(T)$.
\end{lemma}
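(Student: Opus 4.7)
The strategy is to verify that the two components of $\nabla_w\times(\nabla_w v)$, namely the volume piece $\nabla_{w,0}\times(\nabla_w v)\in V_{k,0}^{(3)}(T)$ and the face piece $\nabla_{w,f}\times(\nabla_w v)\in V_{k,f}^{(3)}(f)$ on each face $f\in F(T)$, vanish separately. The calculation boils down to the two continuous identities $\nabla\cdot(\nabla\times\theta)=0$ and $\nabla\times(\nabla\tau)=0$, together with orientation bookkeeping on shared edges and at vertices.

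For the volume component, I would fix an arbitrary test $\theta\in V_{k,0}^{(3)}(T)$ and expand via (\ref{wc0}) to obtain $(\nabla_{w,0}\times(\nabla_w v),\theta)_T=(\nabla_{w,0}v,\nabla\times\theta)_T+(\nabla_{w,f}v,\theta\times\bn)_{\partial T}$. Substituting $\varphi=\nabla\times\theta$ into (\ref{wg0}) kills the $v_0$ term because $\nabla\cdot(\nabla\times\theta)=0$, leaving $(v_f,\nabla\times\theta\cdot\bn)_{\partial T}$. On each face $f$, the vector $\theta\times\bn_f$ lies in $V_{k,f}^{(2)}(f)$ and $(\nabla\times\theta)\cdot\bn_f$ coincides with the surface curl of the tangential part of $\theta$, so applying (\ref{wgf}) makes the $(v_f,\cdot)$ contributions on $\partial T$ cancel face by face. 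What remains is $\sum_{f\in F(T)}(v_e,\theta\cdot\bt_{\partial f})_{\partial f}$, which regroups as a sum over edges; on each edge $e$ shared by two faces $f_1,f_2$, the induced boundary tangents $\bt_{\partial f_1}$ and $\bt_{\partial f_2}$ are opposite by the right-hand rule, so the sum telescopes to zero.

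For the face component I would test against $\tau\bn_f\in V_{k,f}^{(3)}(f)$ in (\ref{wcf}) to obtain $(\nabla_{w,f}\times(\nabla_w v),\tau\bn_f)_f=(\nabla_{w,f}v,\nabla\tau\times\bn_f)_f+(\nabla_{w,e}v,\tau\bt_{\partial f})_{\partial f}$. Since $\nabla\tau$ is tangent to $f$ with polynomial degree one lower than $\tau$, the field $\nabla\tau\times\bn_f$ is an admissible test in (\ref{wgf}); the resulting $(v_f,\nabla\times\nabla\tau\cdot\bn_f)_f$ vanishes by $\nabla\times\nabla\tau=0$, leaving $(v_e,\nabla\tau\cdot\bt_{\partial f})_{\partial f}$. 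Next I would apply (\ref{wge}) edgewise with $\varphi\bt_e=\tau\bt_{\partial f}$, using $\bt_{\partial f}=\pm\bt_e$ so that the signs conspire to produce exactly $-(v_e,\nabla\tau\cdot\bt_{\partial f})_e$ plus a $v_n$ endpoint contribution. The $(v_e,\cdot)$ terms then cancel, and the remaining vertex sum collapses because at each vertex of $\partial f$ the two adjacent edges contribute with opposite orientations relative to $\bt_{\partial f}$.

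The descending-order case (\ref{wg1})–(\ref{wc1}) is handled by the identical chain of substitutions; the only additional task is to check that each auxiliary test function $\nabla\times\theta$, $\theta\times\bn_f$, $\nabla\tau\times\bn_f$, and $\tau\bt_{\partial f}$ sits in the correct $W$-space, which reduces to the degree comparisons $k-3\le k-1$, $k-2\le k-2$, $k-4\le k-2$, and $k-3\le k-3$, all satisfied for $k\ge 3$. The only genuine obstacle I anticipate is pinning down the orientation conventions precisely enough that the three cancellations — face-to-face along an edge, edge-to-edge along $\partial f$, and vertex telescoping — all go through cleanly; once the right-hand relationships among $\bn_f$, $\bt_{f,i}$, $\bt_e$, $\bt_{\partial f}$, and $\bn_{\partial e}$ are fixed, every remaining step is a direct substitution.
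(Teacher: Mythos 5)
Your proposal is correct and follows essentially the same route as the paper's proof: split $\nabla_w\times(\nabla_w v)$ into its volume and face components, substitute $\nabla\times\theta$ and $\nabla\tau$ into the weak gradient definitions so that $\nabla\cdot(\nabla\times\theta)=0$ and $\nabla\times\nabla\tau=0$ kill the interior terms, and cancel the remaining boundary contributions by the opposite orientations of shared edges and of the two edges meeting at each vertex. Your explicit verification that the auxiliary test functions land in the correct $W$-spaces for $k\ge 3$ is a detail the paper leaves implicit, but it does not change the argument.
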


\begin{proof}
By the definition of $\nabla_w\times$ in (\ref{wc}) or (\ref{wc1}), we need to show $\nabla_{w,0}\times (\nabla_wv)=0$ and $\nabla_{w,f}\times (\nabla_wv)=0$.

For any $v$ in $V_k^{(1)}(T)$ or $W_k^{(1)}(T)$, we have $\nabla_{w} v=\{\nabla_{w,0} v, \nabla_{w,f} v, \nabla_{w,e}v\}$. By letting $\bu=\nabla_wv$ in (\ref{wc0}) or (\ref{wc10}) and using (\ref{wg0}), (\ref{wgf}), (\ref{wg10})  and (\ref{wg1f}), we have for any $\theta$ in $V_{k,0}^{(3)}(T)$ or $W_{k-2,0}^{(3)}(T)$ that
\begin{equation*}
\begin{split}
&( \nabla _{w,0}\times (\nabla_w v), \theta)_T\\
=& (\nabla_{w,0}\; v, \nabla \times\theta )_T+( \nabla_{w,f} v, \theta\times\bn )_{\pT}\\
=& -(v_0, \nabla \cdot (\nabla \times \theta ))_T +( v_f, (\nabla \times \theta )\cdot \bn)_{\pT}+( \nabla_{w,f} v, \theta\times\bn)_{\pT}\\
=& ( v_f, (\nabla \times \theta) \cdot \bn)_{\pT}- ( v_f, (\nabla \times \theta) \cdot \bn )_{\pT}+\sum_{f\in F(T)}\l v_e, \theta\cdot\bt\r_{\partial f}\\
=& 0,
\end{split}
\end{equation*}
where we have used the fact that $\sum_{f\in F(T)}( v_e, \theta\cdot\bt)_{\partial f}=0$, as each edge is shared by two adjacent faces with tangential vector $\bt$ of opposite directions.

Next we show that $\nabla_{w,f}\times(\nabla_wv)=0$ on each face $f\in F(T)$. To this end, on any given face $f\in F(T)$, by letting $\bu=\nabla_wv$ in (\ref{wcf}) or (\ref{wc1f}) and using (\ref{wgf}), (\ref{wge}), (\ref{wg1f}) and (\ref{wg1e}), we have for any $\tau$ in $V_{k,f}^{(3)}(f)$ or $W_{k-3,f}^{(3)}(f)$,
\begin{eqnarray*}
( \nabla_{w,f}\times (\nabla_wv), \tau\bn_f)_{f}&=&( \nabla_{w,f}v, \nabla \tau\times\bn_f)_f+ ( \nabla_{w,e}v, \tau\bt_{\partial f})_{\partial f}\\
&=&-( v_f, \nabla\times (\nabla\tau)\cdot\bn_f)_f+( v_e,\;\nabla\tau\cdot \bt_{\partial f})_{\partial f}+( \nabla_{w,e}v, \tau\bt_{\partial f})_{\partial f}\\
&=&( v_e,\;\nabla\tau\cdot \bt_{\partial f})_{\partial f}-( v_e,\;\nabla\tau\cdot \bt_{\partial f})_{\partial f}+\sum_{e\in \partial f}( v_n, \tau \bt_{\partial f} \cdot \bn_{\partial(\partial f)})_{\partial e}\\
&=&\sum_{e\in \partial f}( v_n, \tau \bt_{\partial f} \cdot \bn_{\partial(\partial f)})_{\partial e} =0.
\end{eqnarray*}
This completes the proof of the lemma.
\end{proof}

\begin{lemma}
For the weak curl  and the weak divergence operator $\nabla_w\times$ and $\nabla_w\cdot$ define in (\ref{wc}) and (\ref{wd}) or (\ref{wc1}) and (\ref{wd1}) respectively, the
following identity holds true
\begin{equation}\label{d3d2}
\nabla_w\cdot (\nabla_w\times\bu)=0
\end{equation}
for all $\bu$ in $V_k^{(2)}(T)$ or $W_{k-1}^{(2)}(T)$.
\end{lemma}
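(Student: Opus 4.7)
The plan is to test the identity $\nabla_w \cdot(\nabla_w\times \bu)=0$ against an arbitrary $\tau$ from the fourth space ($V_k^{(4)}(T)=P_k(T)$ or $W_{k-3}^{(4)}(T)=P_{k-3}(T)$) and unwind the three layers of definitions (weak divergence, volume weak curl, surface weak curl) until all boundary contributions cancel. First, I would apply (\ref{wd}) (or (\ref{wd1})) to write
\begin{equation*}
(\nabla_w\cdot(\nabla_w\times\bu),\tau)_T
=-(\nabla_{w,0}\times\bu,\nabla\tau)_T+(\nabla_{w,f}\times\bu,\tau\bn)_{\partial T}.
\end{equation*}
Before proceeding, I would check the admissibility of test functions: $\nabla\tau$ has degree at most $k-1$ in the equal-order case and at most $k-4$ in the descending case, which in both cases lies in the space $V_{k,0}^{(3)}(T)=[P_k(T)]^3$ or $W_{k-2,0}^{(3)}(T)=[P_{k-2}(T)]^3$; similarly $\tau|_f\bn_f$ lies in $V_{k,f}^{(3)}(f)$ or $W_{k-3,f}^{(3)}(f)$. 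This is the point where the descending orders in complex 2 are just tight enough for the argument to go through.

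Next, I would substitute $\theta=\nabla\tau$ into (\ref{wc0}) (or (\ref{wc10})) to obtain
\begin{equation*}
(\nabla_{w,0}\times\bu,\nabla\tau)_T
=(\bu_0,\nabla\times\nabla\tau)_T+(\bu_f,\nabla\tau\times\bn)_{\partial T}
=(\bu_f,\nabla\tau\times\bn)_{\partial T},
\end{equation*}
using $\nabla\times\nabla\tau=0$ pointwise. For the surface term, I would work face-by-face: on each $f\in F(T)$ I apply (\ref{wcf}) (or (\ref{wc1f})) with test $\tau\bn_f$, giving
\begin{equation*}
(\nabla_{w,f}\times\bu,\tau\bn_f)_f
=(\bu_f,\nabla\tau\times\bn_f)_f+(\bu_e,\tau\bt)_{\partial f},
\end{equation*}
and then sum over $f\in F(T)$.

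Substituting both expressions back into the first display, the $(\bu_f,\nabla\tau\times\bn_f)_f$ contributions cancel between the volume and surface pieces, leaving only the edge residual $\sum_{f\in F(T)}(\bu_e,\tau\bt)_{\partial f}$. Finally, I would invoke the same orientation argument used in the preceding lemma: each edge $e$ of $T$ is shared by exactly two faces of $T$, and the induced boundary tangent $\bt$ on $\partial f$ points in opposite directions along $e$ for those two faces, so the contributions along each edge cancel. Since $\tau$ was arbitrary in the fourth space and $\nabla_w\cdot(\nabla_w\times\bu)$ belongs to that space, this forces $\nabla_w\cdot(\nabla_w\times\bu)=0$.

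The only genuine obstacle is bookkeeping: one has to be careful that $\nabla\tau$ and $\tau\bn_f$ really are legal test functions in the descending-order setting (they are, with one degree to spare) and that the orientation of $\bt$ on $\partial f$ is consistent with the right-hand-rule convention introduced with $\bn_f$; once this is in place, the cancellations are completely parallel to those used in proving $\nabla_w\times\nabla_w=0$.
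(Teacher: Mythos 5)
Your proposal is correct and follows essentially the same route as the paper's proof: substitute $\bw=\nabla_w\times\bu$ into the weak divergence definition, test the volume weak curl with $\theta=\nabla\tau$ and the surface weak curl with $\tau\bn_f$, cancel the $(\bu_f,\nabla\tau\times\bn)_{\partial T}$ terms, use $\nabla\times\nabla\tau=0$, and kill the remaining edge sum by the opposite-orientation argument. Your extra degree-counting check that $\nabla\tau$ and $\tau|_f\bn_f$ are admissible test functions in the descending-order case is a worthwhile addition that the paper leaves implicit.
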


\begin{proof} Recall that for $\bu=\{\bu_0,\bu_f,\bu_e\}$ in $V_k^{(2)}(T)$ or $W_{k-1}^{(2)}(T)$, we have $\nabla_w\times\bu=\{\nabla_{w,0}\times\bu,\nabla_{w,f}\times\bu\}$. By letting $\bw=\nabla_w\times\bu$ in (\ref{wd}) or (\ref{wd1}) and using (\ref{wc0}) and (\ref{wcf}) or (\ref{wc10}) and (\ref{wc1f}), we have for any $\tau$ in $V_k^{(4)}(T)$ or $W_{k-3}^{(4)}(T)$ that
\begin{eqnarray*}
(\nabla_{w}\cdot (\nabla_w\times\bu), \tau)_T &=& -(\nabla_{w,0}\times\bu, \nabla\tau)_T+ ( \nabla_{w,f}\times\bu,\tau\bn)_{\pT}\\
&=&-(\bu_0,\nabla\times \nabla\tau)_T-( \bu_f,\nabla\tau\times\bn)_\pT\\
&&+( \bu_f,\nabla\tau\times\bn)_\pT+\sum_{f \subset \pT} (\bu_e,\;\tau\bt)_{\partial f}\\
&=&0
\end{eqnarray*}
which implies (\ref{d3d2}) and thus completes the proof of the lemma.
\end{proof}

\section{Commutative Properties for the WG de Rham Complex 1}
Denote by $Q_0^{(1)}$,  $Q_f^{(1)}$ and $Q_e^{(1)}$ the $L^2$ projection operators onto $P_k(T)$, $P_k(f)$, and $P_k(e)$ respectively. For any $v\in H^1(T)\cap C(T)$, define $Q_h^{(1)}v$ by
\begin{equation}\label{Q1}
Q_h^{(1)}v=\{Q_0^{(1)}v,Q_f^{(1)}v|_f, Q_e^{(1)}v|_e,v|_n\}\in V_k^{(1)}(T).
\end{equation}
Next, let $Q_0^{(2)}$, $Q_f^{(2)}$ and $Q_e^{(2)}$ be the $L^2$ projection operators onto $V_{k,0}^{(2)}(T)$, $V_{k,f}^{(2)}(f)$, and $V_{k,e}^{(2)}(e)$ respectively. For $\bu\in H(curl;T)\cap [C(T)]^3$, define $Q_h^{(2)}\bu$ by
\begin{equation}\label{Q2}
Q_h^{(2)}\bu=\{Q_0^{(2)}\bu,Q_f^{(2)}(\bn_f\times(\bu|_f\times\bn_f)),Q_e^{(2)}(\bu|_e\cdot\bt_e) \bt_e\}\in V_k^{(2)}(T).
\end{equation}
Analogously, with $Q_0^{(3)}$ and $Q_f^{(3)}$ being the $L^2$ projection operators onto $V_{k,0}^{(3)}(T)$ and $V_{k,f}^{(3)}(f)$ we define
\begin{equation}\label{Q3}
Q_h^{(3)}\bw=\{Q_0^{(3)}\bw,Q_f^{(3)}(\bw|_f\cdot\bn_f)\bn_f\}\in V_k^{(3)}(T)
\end{equation}
for any $\bw\in H(div;T)\cap [C(T)]^3$. Denote by $Q_h^{(4)}$ the $L^2$ projection operator from $L^2(T)$ to $V_k^{(4)}(T)$.

\begin{lemma}
For the projection operators $Q_h^{(1)}$ and $Q_h^{(2)}$ defined in (\ref{Q1}) and (\ref{Q2}) respectively and the weak gradient $\nabla_w$  defined in (\ref{wg}), we have
\begin{equation}\label{cm1}
\nabla_w (Q_h^{(1)}v)=Q_h^{(2)}\nabla v\qquad \forall v\in H^1(T)\cap C^1(T).
\end{equation}
\end{lemma}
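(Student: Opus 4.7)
The plan is to establish the identity component-by-component. Since $\nabla_w(Q_h^{(1)}v) = \{\nabla_{w,0}(Q_h^{(1)}v), \nabla_{w,f}(Q_h^{(1)}v), \nabla_{w,e}(Q_h^{(1)}v)\}$ and $Q_h^{(2)} \nabla v$ is similarly split into volume, face, and edge pieces, I would prove three separate equalities, one on $T$, one on each $f \in F(T)$, and one on each $e \in E(T)$. In each case the template is: insert $Q_h^{(1)}v$ into the defining weak identity, remove the $L^2$ projections against the test polynomials (checking the degree count so the projection acts as the identity on the test function), apply a classical integration-by-parts formula to reproduce an inner product with $\nabla v$, and finally reinsert the corresponding $L^2$ projection to match the right-hand side.

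For the volume piece, I would take $\varphi \in V_{k,0}^{(2)}(T) = [P_k(T)]^3$ in (\ref{wg0}); since $\nabla \cdot \varphi \in P_{k-1}(T)\subset P_k(T)$ and $\varphi \cdot \bn|_f \in P_k(f)$, I can replace $Q_0^{(1)}v$ and $Q_f^{(1)}v$ by $v$, then use the divergence theorem to collapse the right-hand side to $(\nabla v, \varphi)_T = (Q_0^{(2)} \nabla v, \varphi)_T$. For the edge piece, I would take $\varphi \bt_e \in V_{k,e}^{(2)}(e)$ in (\ref{wge}); because $\nabla \varphi \cdot \bt_e \in P_{k-1}(e)$ and vertex values are retained exactly in $Q_h^{(1)}$, the one-dimensional fundamental theorem of calculus along $e$ gives $(\nabla v \cdot \bt_e, \varphi)_e = ((\nabla v|_e\cdot\bt_e)\bt_e, \varphi\bt_e)_e$, which coincides with $(Q_e^{(2)}((\nabla v|_e\cdot \bt_e)\bt_e), \varphi\bt_e)_e$ by the projection property.

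The face piece is where the main work lies. For $\theta \in V_{k,f}^{(2)}(f)$, I would substitute $Q_h^{(1)} v$ into (\ref{wgf}); since $(\nabla\times\theta)\cdot\bn_f \in P_k(f)$ and $\theta\cdot\bt_{\partial f}|_e \in P_k(e)$, the projections drop and the right-hand side becomes $-(v, (\nabla\times\theta)\cdot\bn_f)_f + (v, \theta\cdot\bt_{\partial f})_{\partial f}$. Applying the product rule $\nabla\times(v\theta) = v\nabla\times\theta + \nabla v\times\theta$ together with Stokes' theorem on the flat face $f$ converts this to $(\nabla v\times\theta, \bn_f)_f$, which by the scalar triple identity equals $(\nabla v, \theta\times\bn_f)_f$. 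The delicate step is to then identify this with $(Q_f^{(2)}(\bn_f\times(\nabla v|_f \times\bn_f)), \theta\times\bn_f)_f$: since $\theta\times\bn_f$ is tangent to $f$, only the tangential component $\bn_f\times(\nabla v|_f\times\bn_f)$ of $\nabla v$ contributes, and the map $\theta \mapsto \theta\times\bn_f$ is a bijection of $V_{k,f}^{(2)}(f)$ onto itself, so the test functions sweep out the entire projection space and the $L^2$ projection can legitimately be inserted.

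The main obstacle is the face identity, specifically the bookkeeping of the surface integration-by-parts and the verification that the test functions $\theta \times \bn_f$ range over the full space $V_{k,f}^{(2)}(f)$ needed to activate the definition of $Q_f^{(2)}$. Once this is in hand, each of the three component identities matches one slot of $Q_h^{(2)}\nabla v$, and assembling them yields (\ref{cm1}).
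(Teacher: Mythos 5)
Your proposal is correct and follows essentially the same route as the paper: the same three-way split into volume, face, and edge identities, the same degree-count argument to drop the $L^2$ projections from the test data, and the same integration-by-parts (divergence theorem, surface Stokes, fundamental theorem of calculus) to recover $\nabla v$ before reinserting the target projection. Your extra remark that $\theta\mapsto\theta\times\bn_f$ is a bijection of $V_{k,f}^{(2)}(f)$ onto itself is a point the paper leaves implicit, and it is exactly the justification needed for the final step of the face identity.
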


\begin{proof}
By (\ref{wg}), one has $\nabla_wQ_h^{(1)}v=\{\nabla_{w,0}Q_h^{(1)}v, \nabla_{w,f}Q_h^{(1)}v, \nabla_{w,e} Q_h^{(1)}v\}$. From (\ref{Q2}), we have
$Q_h^{(2)}\nabla v=\{Q_0^{(2)}\nabla v,Q_f^{(2)}(\bn_f\times(\nabla v|_f\times\bn_f)),Q_e^{(2)}(\nabla v|_e\cdot\bt_e) \bt_e\}$.
Thus it suffices to prove
\begin{eqnarray}\label{g1.one}
\nabla_{w,0}Q_h^{(1)}v&=&Q_0^{(2)}\nabla v,\quad \mbox{ in $T$},\\
\nabla_{w,f}Q_h^{(1)}v&=&Q_f^{(2)}(\bn_f\times(\nabla v \times\bn_f)),\quad \mbox{ on $f\in F(T)$},\label{g1.two}\\
\nabla_{w,e}Q_h^{(1)}v&=&Q_e^{(2)}(\nabla v\cdot\bt_e) \bt_e,\quad \mbox {on $e\in E(T)$}. \label{g1.three}
\end{eqnarray}
To prove \eqref{g1.one}, we have from (\ref{wg0}) that for $v\in H^1(T)\cap C^1(T)$ and any $\varphi\in V_{k,0}^{(2)}(T)$,
\begin{eqnarray*}
(\nabla_{w,0} Q_h^{(1)}v, \ \varphi)_T &=& -( Q_0^{(1)}v, \ \nabla\cdot\varphi)_T+( Q_f^{(1)}v,\varphi\cdot\bn)_\pT\\
&=&-(v, \ \nabla\cdot\varphi)_T+( v,\varphi\cdot\bn)_\pT\\
&=&(\nabla v, \ \varphi)_T=(Q_0^{(2)}\nabla v, \ \varphi)_T,
\end{eqnarray*}
which leads to $\nabla_{w,0}Q_h^{(1)}v=Q_0^{(2)}\nabla v$ in $T$, and thus proves \eqref{g1.one}.

Next, from (\ref{wgf}) we have for any $\theta\in V_{k,f}^{(2)}(f)$
\begin{eqnarray*}
( \nabla_{w,f}Q_h^{(1)}v,\ \theta\times\bn_f)_{f}&=&-( Q_f^{(1)}v, \nabla\times\theta\cdot\bn_f)_f+ ( Q_e^{(1)}v,\; \theta\cdot\bt)_{\partial f}\\
&=&-( v, \nabla\times\theta\cdot\bn_f)_f+ ( v,\; \theta\cdot\bt)_{\partial f}\\
&=&( \nabla v,\ \theta\times\bn)_{f}=( \bn_f\times(\nabla v\times\bn_f),\ \theta\times\bn_f)_{f}\\
&=& ( Q_f^{(2)}(\bn_f\times(\nabla v\times\bn_f)),\ \theta\times\bn)_{f},
\end{eqnarray*}
which verifies the identity \eqref{g1.two}.

To derive \eqref{g1.three}, from (\ref{wge}) we have for $v\in H^1(T)\cap C^1(T)$ and any $\varphi\in V_{k,e}^{(2)}(e)$ that
\begin{eqnarray*}
( \nabla_{w,e}Q_h^{(1)}v, \varphi\bt_e)_{e}&=&-( Q_e^{(1)}v, \nabla\varphi\cdot\bt_e)_e+ ( v,\;\varphi \bt_e\cdot\bn_{\partial e})_{\partial e}\\
&=&-( v, \nabla\varphi\cdot\bt_e)_e+ ( v,\;\varphi \bt_e\cdot\bn_{\partial e})_{\partial e}\\
&=&( \nabla v, \varphi\bt_e)_{e}=( Q_e^{(2)}(\nabla v\cdot\bt_e)\bt_e, \varphi\bt_e)_{e}
\end{eqnarray*}
which verifies \eqref{g1.three}. This completes the proof of the lemma.
\end{proof}

\begin{lemma}
For the projections $Q_h^{(2)}$ and $Q_h^{(3)}$ defined in (\ref{Q2}) and (\ref{Q3}) respectively and the weak curl $\nabla_w\times$  defined in (\ref{wc}), we have
\begin{equation}\label{cm2}
\nabla_w\times (Q_h^{(2)}\bu)=Q_h^{(3)}\nabla\times\bu\qquad \forall \bu\in H(curl, T)\cap [C^1(T)]^3.
\end{equation}
\end{lemma}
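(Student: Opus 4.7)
The plan is to mirror the proof of the previous lemma on the weak gradient commutativity. Since $\nabla_w\times(Q_h^{(2)}\bu) = \{\nabla_{w,0}\times Q_h^{(2)}\bu, \nabla_{w,f}\times Q_h^{(2)}\bu\}$ and $Q_h^{(3)}\nabla\times\bu = \{Q_0^{(3)}\nabla\times\bu, Q_f^{(3)}((\nabla\times\bu)\cdot\bn_f)\bn_f\}$, it suffices to establish the two componentwise identities
\begin{eqnarray*}
\nabla_{w,0}\times Q_h^{(2)}\bu &=& Q_0^{(3)}\nabla\times\bu \quad \text{in } T,\\
\nabla_{w,f}\times Q_h^{(2)}\bu &=& Q_f^{(3)}((\nabla\times\bu)\cdot\bn_f)\bn_f \quad \text{on each } f\in F(T).
\end{eqnarray*}

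For the volume identity, I would test the definition (\ref{wc0}) against an arbitrary $\theta\in V_{k,0}^{(3)}(T)=[P_k(T)]^3$. Since $\nabla\times\theta\in [P_{k-1}(T)]^3\subset V_{k,0}^{(2)}(T)$, the projection $Q_0^{(2)}$ may be dropped in the $L^2$ pairing on $T$; on each face $f$, the vector $\theta\times\bn_f$ is tangential to $f$ with polynomial coefficients of degree $\leq k$, hence lies in $V_{k,f}^{(2)}(f)$, so $Q_f^{(2)}$ may also be dropped after observing $\bn_f\times(\bu\times\bn_f)\cdot(\theta\times\bn_f)=\bu\cdot(\theta\times\bn_f)$ since $\theta\times\bn_f$ is tangential. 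The remaining identity
$$
(\nabla_{w,0}\times Q_h^{(2)}\bu,\theta)_T=(\bu,\nabla\times\theta)_T+(\bu,\theta\times\bn)_{\partial T}=(\nabla\times\bu,\theta)_T
$$
is the classical integration-by-parts formula for the curl (valid for $\bu\in H(\mathrm{curl};T)\cap[C^1(T)]^3$), and since $\theta\in V_{k,0}^{(3)}(T)$, the right-hand side equals $(Q_0^{(3)}\nabla\times\bu,\theta)_T$.

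For the surface identity, I would test the definition (\ref{wcf}) against $\tau\bn_f\in V_{k,f}^{(3)}(f)$ with $\tau\in P_k(f)$. The surface gradient $\nabla\tau$ is tangential of degree $\leq k-1$, so $\nabla\tau\times\bn_f\in V_{k,f}^{(2)}(f)$, allowing the $Q_f^{(2)}$ to be removed as above. On each edge $e\subset\partial f$, the trace $\tau|_e\in P_k(e)$ gives $\tau\bt_e\in V_{k,e}^{(2)}(e)$, so $Q_e^{(2)}$ may be stripped from the edge pairing as well. The sign discrepancy between the fixed tangent $\bt_e$ and the oriented tangent $\bt_{\partial f}$ cancels on both sides of the pairing, leaving
$$
(\nabla_{w,f}\times Q_h^{(2)}\bu,\tau\bn_f)_f=(\bu,\nabla\tau\times\bn_f)_f+(\bu,\tau\bt)_{\partial f}.
$$
Applying the surface Stokes identity on $f$, namely $((\nabla\times\bu)\cdot\bn_f,\tau)_f=(\bu,\nabla\tau\times\bn_f)_f+(\bu,\tau\bt)_{\partial f}$ (which follows from $\int_f(\nabla\times(\tau\bu))\cdot\bn_f=\int_{\partial f}\tau\bu\cdot\bt$ together with $(\nabla\tau\times\bu)\cdot\bn_f=-\bu\cdot(\nabla\tau\times\bn_f)$), yields the desired equality, since $\tau\bn_f\in V_{k,f}^{(3)}(f)$ means $Q_f^{(3)}$ may be inserted freely.

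The main obstacle I foresee is purely bookkeeping: verifying that every test polynomial lies in the correct WG subspace so that each $L^2$ projection can be removed, and reconciling the orientation of $\bt_e$ with the boundary-of-face orientation $\bt_{\partial f}$. The analytical content reduces to two well-known identities, the standard Stokes formula on $T$ and its surface version on $f$; everything else is algebraic verification matching the structure of the previous gradient-commutativity lemma.
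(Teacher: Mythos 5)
Your proposal is correct and follows essentially the same route as the paper's own proof: split into the volume and surface componentwise identities, strip each $L^2$ projection by checking that $\nabla\times\theta$, $\theta\times\bn_f$, $\nabla\tau\times\bn_f$, and $\tau\bt_e$ lie in the corresponding WG test spaces, use the tangential-projection identity $\bn_f\times(\bu\times\bn_f)\cdot(\theta\times\bn_f)=\bu\cdot(\theta\times\bn_f)$, and conclude with the standard curl integration by parts on $T$ and the surface Stokes formula on $f$. The extra bookkeeping you flag (polynomial degrees and the $\bt_e$ versus $\bt_{\partial f}$ orientation) is handled implicitly in the paper but does not change the argument.
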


\begin{proof}
By (\ref{wc}), one has $\nabla_w\times Q_h^{(2)}\bu=\{\nabla_{w,0}\times Q_h^{(2)}\bu, \nabla_{w,f}\times Q_h^{(2)}\bu\}$. By (\ref{Q3}),  $Q_h^{(3)}\nabla\times\bu=\{Q_0^{(3)}\nabla\times\bu, Q_f^{(3)}(\nabla\times\bu\cdot\bn_f)\bn_f\}$. Thus, if suffices to prove
\begin{eqnarray}\label{c1:01}
\nabla_{w,0}\times Q_h^{(2)}\bu&=&Q_0^{(3)}\nabla\times\bu,\\
\nabla_{w,f}\times Q_h^{(2)}\bu&=&Q_f^{(3)}(\nabla\times\bu\cdot\bn_f)\bn_f. \label{c1:02}
\end{eqnarray}
It follows from (\ref{wc0}) that for $\bu\in H(curl, T)\cap[C^1(T)]^3$ and any $\theta\in V_{k,0}^{(3)}(T)$,
\begin{eqnarray*}
 (\nabla_{w,0}\times Q_h^{(2)}\bu, \theta)_T &=& (Q_0^{(2)}\bu, \nabla\times\theta)_T + ( Q_f^{(2)}(\bn\times(\bu\times\bn)), \theta\times\bn)_{\pT}\\
&=& (\bu, \nabla\times\theta)_T + ( \bn\times(\bu\times\bn), \theta\times\bn)_{\pT}\\
&=& (\bu, \nabla\times\theta)_T + ( \bu, \theta\times\bn)_{\pT}\\
 &=&(\nabla\times \bu, \theta)_T=(Q_0^{(3)}\nabla\times \bu, \theta)_T
\end{eqnarray*}
which verifies \eqref{c1:01}.
Next, from (\ref{wcf}) we have for any $\tau\bn_f\in V_{k,f}^{(3)}(f)$,
\begin{eqnarray*}
(\nabla_{w,f}\times Q_h^{(2)}\bu, \tau\bn_f)_{f}&=&( Q_f^{(2)}(\bn_f\times(\bu\times\bn_f)), \nabla \tau\times\bn_f)_f+( Q_e^{(2)}(\bu\cdot\bt)\bt,\; \tau\bt)_{\partial f}\\
&=&( \bn_f\times(\bu\times\bn_f), \nabla \tau\times\bn_f)_f+( (\bu\cdot\bt)\bt,\; \tau\bt)_{\partial f}\\
&=&( \bu, \nabla \tau\times\bn_f)_f+( \bu,\; \tau\bt)_{\partial f}\\
&=&( \nabla\times \bu \cdot\bn_f, \tau)_{f}=( Q_f^{(3)}(\nabla\times \bu\cdot\bn_f)\bn_f, \tau\bn_f)_{f},
\end{eqnarray*}
which leads to \eqref{c1:02}. This completes the proof of the lemma.
\end{proof}

\begin{lemma}
For the operator $Q_h^{(3)}$ defined in (\ref{Q3}), the $L^2$ projection operator $Q_h^{(4)}$ from $L^2(T)$ to $V_k^{(4)}(T)$, and the weak divergence operator $\nabla_w\cdot$  defined in (\ref{wd}), the following identity holds true
\begin{equation}\label{cm3}
\nabla_w\cdot (Q_h^{(3)}\bw)=Q_h^{(4)}\nabla\cdot\bw\qquad \forall \bw\in H(div; T)\cap [C(T)]^3.
\end{equation}
\end{lemma}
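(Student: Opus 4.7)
The plan is to mimic the structure used in the previous two commutativity lemmas: test the identity against an arbitrary $\tau\in V_k^{(4)}(T)=P_k(T)$, unpack the definition of $\nabla_w\cdot$ applied to $Q_h^{(3)}\bw$, and use the defining properties of the $L^2$ projections $Q_0^{(3)}$, $Q_f^{(3)}$, and $Q_h^{(4)}$ to peel the projections away, leaving the ordinary integration-by-parts formula for $\nabla\cdot\bw$.

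More concretely, I would start from (\ref{wd}) with $\bw$ replaced by $Q_h^{(3)}\bw=\{Q_0^{(3)}\bw,\,Q_f^{(3)}(\bw\cdot\bn_f)\bn_f\}$ and write
\begin{equation*}
(\nabla_w\cdot(Q_h^{(3)}\bw),\tau)_T=-(Q_0^{(3)}\bw,\nabla\tau)_T+(Q_f^{(3)}(\bw\cdot\bn_f)\bn_f,\,\tau\bn)_{\partial T}.
\end{equation*}
For the volume term, since $\tau\in P_k(T)$ gives $\nabla\tau\in[P_{k-1}(T)]^3\subset V_{k,0}^{(3)}(T)$, the $L^2$ projection property yields $(Q_0^{(3)}\bw,\nabla\tau)_T=(\bw,\nabla\tau)_T$. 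For the boundary term, on each face $f\in F(T)$ the unit normal $\bn$ to $\partial T$ coincides with $\bn_f$ (up to the sign convention fixed in Section 2), so $\bn_f\cdot\bn=1$ on $f$ and $\tau|_f\in P_k(f)$; hence the $L^2$ projection property of $Q_f^{(3)}$ gives
\begin{equation*}
(Q_f^{(3)}(\bw\cdot\bn_f)\bn_f,\tau\bn)_f=(Q_f^{(3)}(\bw\cdot\bn_f),\tau)_f=(\bw\cdot\bn_f,\tau)_f=(\bw\cdot\bn,\tau)_f.
\end{equation*}
Summing over faces recovers the full boundary integral $(\bw\cdot\bn,\tau)_{\partial T}$.

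Combining the two and applying the classical divergence theorem on $T$,
\begin{equation*}
(\nabla_w\cdot(Q_h^{(3)}\bw),\tau)_T=-(\bw,\nabla\tau)_T+(\bw\cdot\bn,\tau)_{\partial T}=(\nabla\cdot\bw,\tau)_T=(Q_h^{(4)}\nabla\cdot\bw,\tau)_T,
\end{equation*}
where the last equality uses the definition of $Q_h^{(4)}$ together with the fact that $\nabla_w\cdot(Q_h^{(3)}\bw)-Q_h^{(4)}\nabla\cdot\bw\in V_k^{(4)}(T)=P_k(T)$. Since $\tau\in V_k^{(4)}(T)$ is arbitrary, this forces the identity (\ref{cm3}).

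The only conceptual subtlety, and what I would pay the most attention to in the writing, is the alignment of the face normals: the boundary datum of $Q_h^{(3)}\bw$ is stored as $Q_f^{(3)}(\bw\cdot\bn_f)\bn_f$ while the test term in (\ref{wd}) uses the outward normal $\bn$ of $\partial T$. Once one observes that these agree face by face under the convention adopted in Section 2, the computation is entirely analogous to the Stokes-type cancellation used in the two preceding lemmas, and no additional machinery beyond the defining projection identities and the divergence theorem is needed.
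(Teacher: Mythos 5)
Your proposal is correct and follows essentially the same route as the paper: unpack the definition \eqref{wd} applied to $Q_h^{(3)}\bw$, remove the projections $Q_0^{(3)}$ and $Q_f^{(3)}$ using that $\nabla\tau$ and $\tau|_f$ lie in the respective polynomial test spaces, and conclude by the divergence theorem and the definition of $Q_h^{(4)}$. The extra care you take with the face-normal alignment is a reasonable point of attention but does not change the argument.
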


\begin{proof}
It follows from (\ref{wd}) that for $\bw\in H(div,T)\cap [C(T)]^3$ and any $\tau\in V_k^{(4)}(T)$,
\begin{eqnarray*}
(\nabla_{w}\cdot Q_h^{(3)}\bw, \tau)_T &=& -(Q_0^{(3)}\bw,\nabla \tau)_T+ ( Q_f^{(3)}(\bw\cdot\bn)\bn, \tau\bn)_{\pT}\\
&=& -(\bw,\nabla \tau)_T+ (\bw, \tau\bn)_{\pT}\\
&=&(\nabla\cdot \bw, \tau)_T=(Q_h^{(4)}\nabla\cdot \bw, \tau)_T,
\end{eqnarray*}
which completes the proof of the lemma.
\end{proof}

\section{Commutative Properties for the WG de Rham Complex 2}

In this section, we show that the diagram in Fig. \ref{d2} commutes with properly defined operators $R_h^{(1)}$, $R_h^{(2)}$, $R_h^{(3)}$ and  $R_h^{(4)}$. To this end, denote by $R_0^{(1)}$, $R_f^{(1)}$ and $R_e^{(1)}$ the $L^2$ projection operators onto $P_k(T)$, $P_{k-1}(f)$, and $P_{k-2}(e)$ respectively.  For any $v\in H^1(T)\cap C^1(T)$, we define $R_h^{(1)}v$ as follows
\begin{equation}\label{R1}
R_h^{(1)}v=\{R_0^{(1)}v,R_f^{(1)}v|_f,R_e^{(1)}v|_e,v|_n\}\in W_k^{(1)}(T).
\end{equation}
Next, denote by $R_0^{(2)}$, $R_f^{(2)}$ and $R_e^{(2)}$ the $L^2$ projection operators onto $W_{k-1,0}^{(2)}(T)$, $W_{k-2,f}^{(2)}(f)$, and $W_{k-3,e}^{(2)}(e)$ respectively. For $\bu\in H(curl;T)\cap [C(T)]^3$, we define $R_h^{(2)}\bu$ as follows
\begin{equation}\label{R2}
R_h^{(2)}\bu=\{R_0^{(2)}\bu,R_f^{(2)}(\bn_f\times(\bu|_f\times\bn_f)),R_e^{(2)}(\bu|_e\cdot\bt_e)\bt_e\}\in W_{k-1}^{(2)}(T).
\end{equation}
Analogously, with $R_0^{(3)}$ and $R_f^{(3)}$ being the $L^2$ projection operators onto $W_{k-2,0}^{(3)}(T)$ and $W_{k-3,f}^{(3)}(f)$, we may define $R_h^{(3)}\bw$ by
\begin{equation}\label{R3}
R_h^{(3)}\bw=\{R_0^{(3)}\bw,R_f^{(3)}(\bw\cdot\bn_f)\bn_f\}\in W_{k-2}^{(3)}(T)
\end{equation}
for all $\bw\in H(div;T)\cap[C(T)]^3$. Our fourth operator $R_h^{(4)}$ is given as the $L^2$ projection operator from $L^2(T)$ to $W_{k-3}^{(4)}(T)$.

\begin{lemma}
For the linear operators $R_h^{(1)}$ and $R_h^{(2)}$ defined in (\ref{R1}) and (\ref{R2}) respectively and the weak gradient $\nabla_w$  defined in (\ref{wg1}), the following identity holds true
\begin{equation}\label{cm11}
\nabla_w (R_h^{(1)}v)=R_h^{(2)}\nabla v
\end{equation}
for all $v\in H^1(T)\cap C^1(T)$,
\end{lemma}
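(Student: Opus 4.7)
The plan is to mirror the proof of the analogous commutative identity \eqref{cm1} for the equal-order complex, now adapted to the descending-order spaces. First I would expand both sides of \eqref{cm11} componentwise: by \eqref{wg1},
$$
\nabla_w R_h^{(1)}v = \{\nabla_{w,0} R_h^{(1)}v,\; \nabla_{w,f} R_h^{(1)}v,\; \nabla_{w,e} R_h^{(1)}v\},
$$
and by \eqref{R2},
$$
R_h^{(2)}\nabla v = \{R_0^{(2)}\nabla v,\; R_f^{(2)}(\bn_f\times(\nabla v\times\bn_f)),\; R_e^{(2)}(\nabla v\cdot\bt_e)\bt_e\}.
$$
It therefore suffices to verify three identities, one on $T$, one on each face $f\in F(T)$, and one on each edge $e\in E(T)$, completely parallel to \eqref{g1.one}--\eqref{g1.three}.

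For each identity I would plug $R_h^{(1)}v$ into the defining relation \eqref{wg10}, \eqref{wg1f}, or \eqref{wg1e}, then integrate by parts in the continuous problem. The only conceptual step beyond the equal-order case is a polynomial-degree bookkeeping: one must check that every quantity paired against $R_0^{(1)}v$, $R_f^{(1)}v$, or $R_e^{(1)}v$ already lies in the range of that $L^2$ projection, so the projection can be silently removed. Specifically: for $\varphi\in W_{k-1,0}^{(2)}(T)=[P_{k-1}(T)]^3$ we have $\nabla\cdot\varphi\in P_{k-2}(T)\subset P_k(T)$ and $\varphi\cdot\bn|_f\in P_{k-1}(f)$; for $\theta\in W_{k-2,f}^{(2)}(f)$ we have $\nabla\times\theta\cdot\bn_f\in P_{k-3}(f)\subset P_{k-1}(f)$ and $\theta\cdot\bt_{\partial f}\in P_{k-2}(\partial f)$; and for $\varphi\bt_e\in W_{k-3,e}^{(2)}(e)$ we have $\nabla\varphi\cdot\bt_e\in P_{k-4}(e)\subset P_{k-2}(e)$. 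In each case the degree of the tested quantity is strictly less than the order of the projection target in \eqref{R1}, which is exactly what the descending orders and the standing hypothesis $k\ge 3$ are designed to guarantee.

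Once the projections have been discarded, the calculation reduces to classical integration by parts. On $T$ one obtains $(\nabla v,\varphi)_T=(R_0^{(2)}\nabla v,\varphi)_T$. On $f$, a surface integration-by-parts gives $(\nabla v,\theta\times\bn_f)_f=(\bn_f\times(\nabla v\times\bn_f),\theta\times\bn_f)_f$, using that $\theta\times\bn_f$ is tangential to $f$; the right-hand side equals $(R_f^{(2)}(\bn_f\times(\nabla v\times\bn_f)),\theta\times\bn_f)_f$. On $e$, a one-dimensional integration by parts yields $(\nabla v,\varphi\bt_e)_e=(R_e^{(2)}(\nabla v\cdot\bt_e)\bt_e,\varphi\bt_e)_e$, with the boundary term handled trivially since $v|_n$ is pointwise equal to $v$ at the vertices.

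I do not expect a genuine obstacle: the structure of the proof is word-for-word the same as that of \eqref{cm1}, and the only real content is the degree-matching verification outlined above. If anything, the mildly delicate point is tracking the correct polynomial degree on $\partial f$ in the face identity, since $\bt_{\partial f}$ changes orientation from edge to edge; but this affects only signs and is absorbed by the tangential trace convention already fixed in \eqref{wg1f}.
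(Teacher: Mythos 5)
Your proposal is correct and follows essentially the same route as the paper's proof: componentwise reduction to the three identities on $T$, $f\in F(T)$, and $e\in E(T)$, removal of the $L^2$ projections by checking that each tested quantity lies in the corresponding projection range, and then classical integration by parts (the paper performs the same steps but leaves the degree bookkeeping implicit). The only quibble is your phrase ``strictly less than the order of the projection target'': several of the tested quantities have degree exactly equal to that order (e.g.\ $\varphi\cdot\bn|_f\in P_{k-1}(f)$ against $R_f^{(1)}$ onto $P_{k-1}(f)$, and $\theta\cdot\bt_{\partial f}\in P_{k-2}(e)$ against $R_e^{(1)}$ onto $P_{k-2}(e)$), but membership in the range is all that is needed, so the argument stands.
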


\begin{proof}
It follows from (\ref{wg1}) that
$\nabla_wR_h^{(1)}v=\{\nabla_{w,0}R_h^{(1)}v, \nabla_{w,f}R_h^{(1)}v, \nabla_{w,e}R_h^{(1)}v\}$. By (\ref{R2}),  we have
$$
R_h^{(2)}\nabla v=\{R_0^{(2)}\nabla v, R_f^{(2)}(\bn_f\times(\nabla v\times\bn_f)), R_e^{(2)}(\nabla v\cdot\bt_e)\bt_e\}.
$$
Thus we need to prove
\begin{eqnarray}\label{g11.one}
\nabla_{w,0}R_h^{(1)}v&=&R_0^{(2)}\nabla v,\qquad \mbox{ in } T,\\
\nabla_{w,f}R_h^{(1)}v&=&R_f^{(2)}(\bn_f\times(\nabla v\times\bn_f)),\qquad\mbox{ on } f\in F(T),\label{g11.two}\\
\nabla_{w,e}R_h^{(1)}v&=&R_e^{(2)}(\nabla v\cdot\bt_e)\bt_e,\qquad \mbox{ on } e\in E(T).\label{g11.three}
\end{eqnarray}
From (\ref{wg10}), we have for any $\varphi\in W_{k-1,0}^{(2)}(T)$
\begin{eqnarray*}
(\nabla_{w,0} R_h^{(1)}v, \ \varphi)_T &=& -( R_0^{(1)}v, \ \nabla\cdot\varphi)_T+(R_f^{(1)}v,\varphi\cdot\bn)_\pT\\
&=&-(v, \ \nabla\cdot\varphi)_T+(v,\varphi\cdot\bn)_\pT\\
&=&(\nabla v, \ \varphi)_T=(R_0^{(2)}\nabla v, \ \varphi)_T,
\end{eqnarray*}
which proves \eqref{g11.one}.

Next, from (\ref{wg1f}) we have for any $\theta\in W_{k-2,f}^{(2)}(f)$
\begin{eqnarray*}
(\nabla_{w,f}R_h^{(1)}v,\ \theta\times\bn_f)_{f}&=&-( R_f^{(1)}v, \nabla\times\theta\cdot\bn_f)_f+( R_e^{(1)}v,\; \theta\cdot\bt)_{\partial f}\\
&=&-(v, \nabla\times\theta\cdot\bn_f)_f+( v,\; \theta\cdot\bt)_{\partial f}\\
&=&( \nabla v,\ \theta\times\bn_f)_{f}\\
&=&( \bn_f\times(\nabla v\times\bn_f),\ \theta\times\bn_f)_{f}\\
&=&( R_f^{(2)}(\bn_f\times(\nabla v\times\bn_f)),\ \theta\times\bn_f)_{f},
\end{eqnarray*}
which verifies \eqref{g11.two}.

Finally, from (\ref{wg1e}), we have for any $\varphi\in W_{k-3,e}^{(2)}(e)$
\begin{eqnarray*}
(\nabla_{w,e}R_h^{(1)}v, \varphi\bt_e)_{e}&=&-( R_e^{(1)}v, \nabla\varphi\cdot\bt_e)_e+ ( v,\;\varphi\bt_e\cdot\bn_{\partial e})_{\partial e}\\
&=&-( v, \nabla\varphi\cdot\bt_e)_e+ ( v,\;\varphi\bt_e\cdot\bn_{\partial e})_{\partial e}\\
&=&( \nabla v\cdot\bt_e, \varphi)_{e}=( R_e^{(2)}(\nabla v\cdot\bt_e)\bt_e, \varphi\bt_e)_{e},
\end{eqnarray*}
which proves \eqref{g11.three}. This completes the proof of the lemma.
\end{proof}

\begin{lemma}
For the linear operators $R_h^{(2)}$ and $R_h^{(3)}$ defined in (\ref{R2}) and (\ref{R3}) and the weak curl $\nabla_w\times$  defined in (\ref{wc1}), the following identity holds true:
\begin{equation}\label{cm12}
\nabla_w\times (R_h^{(2)}\bu)=R_h^{(3)}\nabla\times\bu
\end{equation}
for all $\bu\in H(curl; T)\cap [C(T)]^3$.
\end{lemma}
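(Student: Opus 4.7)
The plan is to mirror the structure of the analogous commutativity lemma for Complex 1 (the identity $\nabla_w\times Q_h^{(2)}\bu = Q_h^{(3)}\nabla\times\bu$), but with careful attention paid to the descending polynomial orders in Complex 2 so that each $L^2$ projection can be absorbed into or removed from the relevant pairing. First, I would decompose both sides using the definition (\ref{wc1}) of the composite weak curl and the definition (\ref{R3}) of $R_h^{(3)}$, reducing the claim to the two componentwise identities
\begin{equation*}
\nabla_{w,0}\times R_h^{(2)}\bu = R_0^{(3)}\nabla\times\bu \quad \text{in } T, \qquad \nabla_{w,f}\times R_h^{(2)}\bu = R_f^{(3)}(\nabla\times\bu\cdot\bn_f)\bn_f \quad \text{on } f.
\end{equation*}

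For the volume identity, I would test $\nabla_{w,0}\times R_h^{(2)}\bu$ against an arbitrary $\theta\in W_{k-2,0}^{(3)}(T)=[P_{k-2}(T)]^3$ via (\ref{wc10}). The decisive degree observations are: $\nabla\times\theta\in[P_{k-3}(T)]^3\subset W_{k-1,0}^{(2)}(T)$, so the volume projection $R_0^{(2)}$ may be dropped in $(R_0^{(2)}\bu,\nabla\times\theta)_T$; and on each face $f$, the tangential vector $\theta\times\bn_f$ has polynomial components of degree $\le k-2$, hence lies in $W_{k-2,f}^{(2)}(f)$, so $R_f^{(2)}$ may be dropped in the boundary pairing. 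Using that $\theta\times\bn_f$ is tangential to unwind the double cross product $\bn_f\times(\bu\times\bn_f)\cdot(\theta\times\bn_f)=\bu\cdot(\theta\times\bn_f)$ and invoking the standard Green's formula, the right-hand side of (\ref{wc10}) collapses to $(\nabla\times\bu,\theta)_T$. Since $\theta\in W_{k-2,0}^{(3)}(T)$ lies in the range of $R_0^{(3)}$, this equals $(R_0^{(3)}\nabla\times\bu,\theta)_T$, yielding the first identity.

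For the surface identity on a face $f$, I would pair with $\tau\bn_f$ for arbitrary $\tau\in P_{k-3}(f)$ via (\ref{wc1f}). Now $\nabla\tau\times\bn_f$ is tangential with polynomial components of degree $\le k-4\le k-2$, so it lies in $W_{k-2,f}^{(2)}(f)$ and $R_f^{(2)}$ drops out of the face pairing; similarly, on each edge $e\subset\partial f$, $\tau\bt$ has degree $\le k-3$, so lies in $W_{k-3,e}^{(2)}(e)$ and $R_e^{(2)}$ drops out. After the tangential simplifications, the right-hand side reduces to $(\bu,\nabla\tau\times\bn_f)_f+(\bu,\tau\bt)_{\partial f}$. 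The identity $\nabla\times(\tau\bu)\cdot\bn_f=\tau\,\nabla\times\bu\cdot\bn_f+\nabla\tau\times\bu\cdot\bn_f$ combined with the planar Stokes theorem on $f$ (and $\nabla\tau\times\bu\cdot\bn_f=-\bu\cdot(\nabla\tau\times\bn_f)$) rewrites this as $(\nabla\times\bu\cdot\bn_f,\tau)_f$, which finally equals $(R_f^{(3)}(\nabla\times\bu\cdot\bn_f)\bn_f,\tau\bn_f)_f$ because $\tau\in P_{k-3}(f)$ belongs to the range of $R_f^{(3)}$.

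The sole point of substance that distinguishes this argument from the equal-order case is the polynomial-degree bookkeeping: one must verify at each step that the test expressions $\nabla\times\theta$, $\theta\times\bn$, $\nabla\tau\times\bn_f$, and $\tau\bt$ fall inside the ranges of the corresponding $L^2$ projections appearing in the definition of $R_h^{(2)}$. The hypothesis $k\ge 3$ is precisely what guarantees that all the descending spaces $P_{k-1}$, $P_{k-2}$, $P_{k-3}$ are meaningful and that the relevant degree gaps are non-negative, so that the projections may be removed as claimed.
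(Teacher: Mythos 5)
Your proposal is correct and follows essentially the same route as the paper: reduce to the two componentwise identities, test against $\theta\in W_{k-2,0}^{(3)}(T)$ and $\tau\bn_f\in W_{k-3,f}^{(3)}(f)$, drop the $L^2$ projections because the test expressions lie in the corresponding target spaces, and conclude via the tangential identities and Green's/Stokes' formulas. The explicit degree bookkeeping you add is exactly what the paper leaves implicit, and it checks out.
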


\begin{proof}
By (\ref{wc1}), one has $\nabla_w\times R_h^{(2)}\bu=\{\nabla_{w,0}\times R_h^{(2)}\bu, \nabla_{w,f}\times R_h^{(2)}\bu\}$. By (\ref{R3}), we have $R_h^{(3)}\nabla\times\bu=\{R_0^{(3)}\nabla\times\bu, R_f^{(3)}(\nabla\times\bu\cdot\bn_f)\bn_f)\}$. Thus it suffices to prove
\begin{equation}\label{c11}
\nabla_{w,0}\times R_h^{(2)}\bu=R_0^{(3)}\nabla\times\bu,\;\;\nabla_{w,f}\times R_h^{(2)}\bu=R_f^{(3)}(\nabla\times\bu\cdot\bn_f)\bn_f.
\end{equation}
First, it follows from (\ref{wc10}) that for $\bu\in H(curl; T)\cap[C(T)]^3$ and any $\theta\in W_{k-2,0}^{(3)}(T)$,
\begin{eqnarray*}
 (\nabla_{w,0}\times R_h^{(2)}\bu, \theta)_T &=& (R_0^{(2)}\bu, \nabla\times\theta)_T +( R_f^{(2)}(\bn\times(\bu\times\bn)), \theta\times\bn)_{\pT}\\
&=& (\bu, \nabla\times\theta)_T +(\bn\times(\bu\times\bn), \theta\times\bn)_{\pT}\\
&=& (\bu, \nabla\times\theta)_T + ( \bu, \theta\times\bn)_{\pT}\\
 &=&(\nabla\times \bu, \theta)_T=(R_0^{(3)}\nabla\times \bu, \theta)_T,
\end{eqnarray*}
which implies that $\nabla_{w,0}\times R_h^{(2)}\bu=R_0^{(3)}\nabla\times\bu$.

Next, from (\ref{wc1f}) we have for any $\tau\bn\in W_{k-3,f}^{(3)}(f)$
\begin{eqnarray*}
(\nabla_{w,f}\times R_h^{(2)}\bu, \tau\bn_f)_{f}&=&( R_f^{(2)}(\bn_f\times(\bu\times\bn_f)), \nabla \tau\times\bn_f)_f+( R_e^{(2)}(\bu\cdot\bt)\bt,\; \tau\bt)_{\partial f}\\
&=&( \bn_f\times(\bu\times\bn_f), \nabla \tau\times\bn_f)_f+((\bu\cdot\bt)\bt,\; \tau\bt)_{\partial f}\\
&=&( \bu, \nabla \tau\times\bn_f)_f+( \bu,\; \tau\bt)_{\partial f}\\
&=&( \nabla\times \bu\cdot\bn_f, \tau)_{f}=( R_f^{(3)}(\nabla\times \bu\cdot\bn_f)\bn_f, \tau\bn_f)_{f},
\end{eqnarray*}
which implies $\nabla_{w,f}\times R_h^{(2)}\bu=R_f^{(3)}(\nabla\times\bu\cdot\bn_f)\bn_f$ on face $f\in F(T)$. This completes the proof of the lemma.
\end{proof}

\begin{lemma}
For the linear operator $R_h^{(3)}$ defined in (\ref{R3}),  the $L^2$ projection $R_h^{(4)}$  from $L^2(T)$ to $W_{k-3}^{(4)}(T)$, and the weak divergence $\nabla_w\cdot$  defined in (\ref{wd1}), the following identity holds true
\begin{equation}\label{cm13}
\nabla_w\cdot (R_h^{(3)}\bw)=R_h^{(4)}\nabla\cdot\bw
\end{equation}
for all $\bw\in H(div,T)\cap[C(T)]^3$.
\end{lemma}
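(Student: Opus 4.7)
The plan is to mirror the proof of the analogous commutativity identity $\nabla_w\cdot(Q_h^{(3)}\bw)=Q_h^{(4)}\nabla\cdot\bw$ that was established in the previous section for the equal-order complex. The structure of the argument is dictated by the definition (\ref{wd1}) of the weak divergence as a duality on test polynomials, so I would fix an arbitrary $\tau\in W_{k-3}^{(4)}(T)=P_{k-3}(T)$ and chase the identity by testing against $\tau$.

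First I would apply (\ref{wd1}) with $\bw$ replaced by $R_h^{(3)}\bw=\{R_0^{(3)}\bw,R_f^{(3)}(\bw\cdot\bn_f)\bn_f\}$ to obtain
$$(\nabla_w\cdot(R_h^{(3)}\bw),\tau)_T=-(R_0^{(3)}\bw,\nabla\tau)_T+(R_f^{(3)}(\bw\cdot\bn)\bn,\tau\bn)_{\partial T}.$$
The key observation, and the only subtle point, is that the polynomial degrees line up exactly so the $L^2$ projections can be shed: since $\tau\in P_{k-3}(T)$, the test function $\nabla\tau$ lies in $[P_{k-4}(T)]^3\subset W_{k-2,0}^{(3)}(T)=[P_{k-2}(T)]^3$, so the defining identity of $R_0^{(3)}$ gives $(R_0^{(3)}\bw,\nabla\tau)_T=(\bw,\nabla\tau)_T$. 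Similarly, on each face $f\in F(T)$ we have $\tau|_f\in P_{k-3}(f)$, hence $\tau\bn_f\in W_{k-3,f}^{(3)}(f)$, and
$$(R_f^{(3)}(\bw\cdot\bn_f)\bn_f,\tau\bn_f)_f=((\bw\cdot\bn_f)\bn_f,\tau\bn_f)_f=(\bw,\tau\bn_f)_f,$$
where in the last step I use $\bn_f\cdot\bn_f=1$ and the fact that only the normal component of $\bw$ pairs with $\tau\bn_f$.

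After removing both projections, classical integration by parts on $T$ yields
$$-(\bw,\nabla\tau)_T+(\bw,\tau\bn)_{\partial T}=(\nabla\cdot\bw,\tau)_T,$$
which is valid because $\bw\in H(\mathrm{div};T)\cap[C(T)]^3$. Finally, the defining property of the $L^2$ projection $R_h^{(4)}:L^2(T)\to W_{k-3}^{(4)}(T)$ gives $(\nabla\cdot\bw,\tau)_T=(R_h^{(4)}\nabla\cdot\bw,\tau)_T$ for every $\tau\in W_{k-3}^{(4)}(T)$. Chaining these equalities shows that $\nabla_w\cdot(R_h^{(3)}\bw)$ and $R_h^{(4)}\nabla\cdot\bw$ have identical duality pairings against every $\tau\in W_{k-3}^{(4)}(T)$, and since both live in $W_{k-3}^{(4)}(T)$, they must coincide.

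I do not anticipate any real obstacle; the only step that requires attention is the degree bookkeeping $\nabla\tau\in[P_{k-4}(T)]^3\subset[P_{k-2}(T)]^3$ and $\tau|_f\in P_{k-3}(f)$, which is precisely why the complex is set up with the descending orders $k$, $k-1$, $k-2$, $k-3$. Everything else is a direct transcription of the Complex 1 argument with $Q_h^{(i)}$ replaced by $R_h^{(i)}$ and the polynomial spaces adjusted accordingly.
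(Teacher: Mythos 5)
Your proposal is correct and follows essentially the same route as the paper: test against $\tau\in W_{k-3}^{(4)}(T)$, drop the $L^2$ projections $R_0^{(3)}$ and $R_f^{(3)}$ because $\nabla\tau$ and $\tau\bn_f$ lie in the respective projection target spaces, integrate by parts, and invoke the defining property of $R_h^{(4)}$. The degree bookkeeping you spell out is exactly what makes the paper's one-line removal of the projections legitimate.
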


\begin{proof}
It follows from (\ref{wd1}) that for $\bw\in H(div,T)\cap[C(T)]^3$ and any $\tau\in W_{k-3}^{(4)}(T)$, we have
\begin{eqnarray*}
(\nabla_{w}\cdot R_h^{(3)}\bw, \tau)_T &=& -(R_0^{(3)}\bw,\nabla \tau)_T+ ( R_f^{(3)}(\bw\cdot\bn)\bn, \tau\bn)_{\pT}\\
&=& -(\bw,\nabla \tau)_T+ ( \bw\cdot\bn, \tau)_{\pT}\\
&=&(\nabla\cdot \bw, \tau)_T=(R_h^{(4)}\nabla\cdot \bw, \tau)_T,
\end{eqnarray*}
which proves the lemma.
\end{proof}

\section{Exactness for de Rham Complex 1}
We show that the weak Galerkin de Rham complex 1 is exact for $k=0$ on tetrahedra and  hexahedra.

\begin{theorem}
Let $T$ be a tetrahedron.  The following de Rham complex is exact for $k=0$
 \begin{equation} \label{c0}
\begin{split} \hbox{
\begin{tikzpicture}[scale=1.4]
 \node (AA) at (-1,1) {${\mathbb R}^1$};
 \node (A) at (0,1) {$C^\infty(T)$};
 \node (B) at (1.7,1) {$[{C}^\infty(T)]^3$};
 \node (C) at (3.4,1) {$[{C}^\infty(T)]^3$};
 \node (D) at (5.1,1) {$C^\infty(T)$};
 \node (I) at (6.0,1) {$0$};
 \node (EE) at (-1,0)   {${\mathbb R}^4$};
 \node (E) at (0,0)   {$V_{k}^{(1)}(T)$};
 \node (F) at (1.7,0) {$V_{k}^{(2)}(T)$};
 \node (G) at (3.4,0) {$V_{k}^{(3)}(T)$};
 \node (H) at (5.1,0) {$V_{k}^{(4)}(T)$};
 \node (II) at (6.0,0) {$0$};
 \draw[->] (AA) to node [above]{$I$}(A);
 \draw[->] (A) to node [above]{$\nabla$}(B);
 \draw[->] (B) to node [above]{$\nabla\times$}(C);
 \draw[->] (C) to node [above]{$\nabla\cdot$}(D);
 \draw[->] (D) to node [above]{$N$}(I);
 \draw[->] (EE) to node [above]{$I_w$}(E);
 \draw[->] (E) to node [above]{$\nabla_w$}(F);
 \draw[->] (F) to node [above]{$\nabla_w\times$}(G);
 \draw[->] (G) to node [above]{$\nabla_w\cdot$}(H);
 \draw[->] (A) to node [left]{$Q_h^{(1)}$}(E);
 \draw[->] (B) to node [left]{$Q_h^{(2)}$}(F);
 \draw[->] (C) to node [left]{$Q_h^{(3)}$}(G);
 \draw[->] (D) to node [left]{$Q_h^{(4)}$}(H);
 \draw[->] (H) to node [above]{$N$}(II);
 \end{tikzpicture}
}
\end{split}
\end{equation}
Here $I_w$ is the inclusion map that assigns constant value to $v_0$, $v_f$, $v_e$, and $v_n$; $N$ stands for the null operator.
\end{theorem}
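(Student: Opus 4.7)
The plan is to establish exactness position by position. By the lemmas of Section~3 the sequence is already a complex, so $\text{image}\subseteq\ker$ at every interior slot; only the reverse inclusions remain. The relevant dimensions
\[
\dim V_0^{(1)}(T)=15,\quad \dim V_0^{(2)}(T)=17,\quad \dim V_0^{(3)}(T)=7,\quad \dim V_0^{(4)}(T)=1
\]
produce the alternating sum $4-15+17-7+1=0$, consistent with exactness and serving as a running bookkeeping check.

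The two ends are handled by direct computation. Injectivity of $I_w$ is immediate from its definition. To pin down $\ker(\nabla_w)$, I test (\ref{wg0}), (\ref{wgf}) and (\ref{wge}) against the only polynomials available at $k=0$, namely constants. Using the elementary identities $\sum_{f\in F(T)}|f|\bn_f=0$ and $\sum_{e\subset\partial f}|e|\bt_{\partial f}=0$, together with the cancellation of endpoint contributions on a single edge, the linear independence of any three scaled face normals $|f|\bn_f$, and connectivity of the face/edge/vertex adjacency graphs, one deduces that $v_f$, $v_e$ and $v_n$ must each be a single constant, independent of each other and of $v_0$. Hence $\dim\ker(\nabla_w)=4=\dim\text{image}(I_w)$. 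For surjectivity of $\nabla_w\cdot$, given $q\in V_0^{(4)}=\mathbb{R}$ one takes $\bw=\{0,\,w_f\bn_f\}$ with $\sum_{f\in F(T)} w_f|f|=q|T|$; equation (\ref{wd}) with a constant test function then gives $\nabla_w\cdot\bw=q$.

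The heart of the argument is exactness at $V_0^{(3)}$. Given $\bw=\{\bw_0,w_f\bn_f\}$ with $\sum_f w_f|f|=0$, I construct a preimage $\bu=\{0,\bu_f,u_e\bt_e\}$ under $\nabla_w\times$. Substituting into (\ref{wc0}) and (\ref{wcf}) with constant test fields decouples the problem into a face-wise system
\[
\sum_{e\subset\partial f}\epsilon_{e,f}\,u_e\,|e|=w_f|f|,\qquad f\in F(T),
\]
and a single volume equation
\[
\sum_{f\in F(T)}|f|\,\bn_f\times\bu_f=|T|\,\bw_0,
\]
where $\epsilon_{e,f}=\pm 1$ encodes the orientation of $\bt_e$ against $\bt_{\partial f}$. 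The face system is solvable because each edge lies on exactly two faces with opposite induced orientations $\epsilon_{e,f}$, so its unique linear dependence among the four rows is precisely $\sum_f w_f|f|=0$, namely our hypothesis. The volume equation is solvable because $\bu_f\mapsto\bn_f\times\bu_f$ is a $90^\circ$ rotation within the tangent plane of $f$, and the tangent planes of any two non-parallel faces of $T$ together span $\mathbb{R}^3$; one may even realize $\bw_0$ using $\bu_f$ on just two faces.

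Exactness at $V_0^{(2)}$ then closes by dimension counting: from the previous steps $\dim\text{image}(\nabla_w)=15-4=11$ and $\dim\text{image}(\nabla_w\times)=\dim\ker(\nabla_w\cdot)=7-1=6$, so $\dim\ker(\nabla_w\times)=17-6=11=\dim\text{image}(\nabla_w)$; the inclusion from $\nabla_w\times\nabla_w=0$ then upgrades to equality. The main technical obstacle is step~3, i.e.\ translating $\nabla_w\times\bu=\bw$ into the concrete linear systems above and verifying solvability. The essential inputs are combinatorial (each tetrahedral edge is shared by exactly two faces with opposite induced orientations) and geometric (non-parallel face normals); the remaining accounting is routine.
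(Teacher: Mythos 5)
Your proposal is correct, and its overall skeleton matches the paper's: compute $\kernel(\nabla_w)$ explicitly to get exactness at $V_0^{(1)}$, establish exactness at $V_0^{(3)}$ and surjectivity of $\nabla_w\cdot$, and then close the remaining slot at $V_0^{(2)}$ by the dimension count $17-6=11=15-4$ combined with $\nabla_w\times\nabla_w=0$. The one genuinely different ingredient is the central step at $V_0^{(3)}$. The paper argues by duality: it writes $V_0^{(3)}(T)=\range(\nabla_w\times)\oplus W$ and shows that any $\bw=\{\bw_0,w_f\bn_f\}$ orthogonal to the range must have $\bw_0=0$ and $w_f$ constant, so $\dim W=1$ and $\range(\nabla_w\times)=\kernel(\nabla_w\cdot)$ follows by counting. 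You instead construct an explicit preimage: testing \eqref{wc0} and \eqref{wcf} against constants decouples into the edge--face incidence system $\sum_{e\subset\partial f}\epsilon_{e,f}u_e|e|=w_f|f|$ (whose unique compatibility condition, coming from the left null vector $(1,1,1,1)$ of the incidence matrix, is exactly $\sum_f w_f|f|=0$, i.e. $\nabla_w\cdot\bw=0$) and the volume equation $\sum_f|f|\,\bn_f\times\bu_f=|T|\bw_0$, solvable since two non-parallel tangent planes span ${\mathbb R}^3$. Both routes rest on the same combinatorial fact (each edge is shared by two faces with opposite induced orientations); yours buys an explicit right inverse and makes the role of the compatibility condition transparent, while the paper's is shorter since it only has to identify a one-dimensional complement. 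Your version also has the minor merit of actually exhibiting surjectivity of $\nabla_w\cdot$ via $\bw=\{0,w_f\bn_f\}$ with $\sum_f w_f|f|=q|T|$, a point the paper passes over when it asserts $\dim(\range(\nabla_w\cdot))=1$. Your kernel computation (testing against constants and using $\sum_f|f|\bn_f=0$, $\sum_{e\subset\partial f}|e|\bt_{\partial f}=0$) is a cosmetic variant of the paper's interpolant argument and is equally valid.
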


\begin{proof} A necessary condition for exactness is zero difference of dimensions; i.e.,
\begin{equation}\label{EQ:100}
\begin{split}
    &\quad \ \sum_{i=0}^4 (-1)^{i} \dim V_k^{(i)}(T) \\
& = \quad \ \ 4 \\
    &\quad \  - \frac{(k+1)(k+2)(k+3)}6 - \quad \ 4\frac{(k+1)(k+2)}2 - 6(k+1)-4\\
     &\quad \ + 3\frac{(k+1)(k+2)(k+3)}6 +2\cdot 4\frac{(k+1)(k+2)}2 + 6(k+1) \\
     &\quad \ - 3\frac{(k+1)(k+2)(k+3)}6  - \quad \ 4\frac{(k+1)(k+2)}2\\
    &\quad \  + \ \frac{(k+1)(k+2)(k+3)}6 \\
&=\quad \ \ 0,
\end{split}
\end{equation}
where we have set $V_k^{(0)}(T) = {\mathbb R}^4$.

We claim that the kernel of the operator $\nabla_{w}$ has dimension $4$. To this end,  let $v\in  V_0^{(1)}(T)\in\kernel(\nabla_w)$; i.e.
$$
0=\nabla_w v =\{\nabla_{w,0} v, \nabla_{w,f}v, \nabla_{w,e}v\}.
$$
From the definition \eqref{wge} for $\nabla_{w,e}v$, we have $v_n=\alpha_4$ with a constant $\alpha_4$ at all the vertices of $T$. On each face (triangle) $f\in F(T)$, one may construct a linear function $P_{1,f} v$ so that $P_{1,f} v= v_e$ at the center of each edge $e\in \partial f$. From the defition \eqref{wgf} we have
$$
 \nabla_{w,f}v = \nabla_f  P_{1,f} v \qquad \forall f\in F(T),
$$
where $\nabla_f$ is the surface gradient operator on the face $f$. It follows from $\nabla_{w,f}v=0$ that $\nabla_f  P_{1,f} v=0$ so that $P_{1,f} v=\alpha_3$ for a constant $\alpha_3$. This shows that $v_e=\alpha_3$ on all edges. Analogously, we may obtain $v_f=\alpha_2$ on all faces for a constant $\alpha_2$. Finally, $v_0=\alpha_1$ is already a constant that does not enter into the calculation of the weak derivatives for the case of $k=0$. This shows that the dimension of $\kernel(\nabla_w)$ is $4$ so that
$$
\range(I_w) = \kernel(\nabla_w).
$$
Since the dimension of $V_0^{(1)}(T)=15$, thus the dimension of $\range(\nabla_w)=15-4=11$.

Next, we claim that
\begin{equation}\label{r(curl)}
\dim (\range(\nabla_w\times))=6.
\end{equation}

Since  $\dim (V_0^{(4)})=1$ and $\dim (V_0^{(3)})=7$, we have $\dim (\range(\nabla_w \cdot ))=1$ and $\dim(\kernel (\nabla_w\cdot))=6$. It follows that
$$
\range(\nabla_w\times) = \kernel (\nabla_w\cdot)
$$
provided that \eqref{r(curl)} holds true.

From $\dim (V_0^{(2)})= \dim(\range(\nabla_w\times)) + \dim(\kernel(\nabla_w\times))$ and \eqref{r(curl)}, we have
$$
\dim(\kernel(\nabla_w\times)) = \dim (V_0^{(2)}) - 6 = 17-6=11=\dim(\range(\nabla_w)).
$$
Hence, we have from \eqref{d2d1} that
$$
\kernel(\nabla_w\times) = \range(\nabla_w).
$$

It remains to prove \eqref{r(curl)}. Note that by (\ref{d3d2}), we have $\dim (\range(\nabla_w\times))\le \dim(\kernel (\nabla_w\cdot))=6$. Consider the following orthogonal decomposition of $V_0^{(3)}(T)$:
\[
V_0^{(3)}(T)= \range(\nabla_w\times)\oplus W.
\]
It is clear that \eqref{r(curl)} is equivalent to $\dim (W)=1$. For any $\bw=\{\bw_0,w_f\bn_f\}\in W$, we have
\[
(\bw, \nabla_w\times\bv)_T=0\quad\forall \bv\in V_0^{(2)},
\]
which implies
\begin{eqnarray}
(\bw_0, \nabla_{w,0}\times\bv)_T&=&0,\label{ww0}\\
(w_f\bn_f, \nabla_{w,f}\times\bv)_f&=&0\quad \mbox{ on each face } f.\label{wwf}
\end{eqnarray}
Using the definition of $\nabla_{w,0}$ in (\ref{wc0}) and (\ref{ww0}), we have
\[
\sum_f(\bw_0,\bn_f\times\bv_f)_f=0\quad\forall \bv=\{\bv_0,\bv_f\}\in V_{0,0}^{(2)}\times V_{0,f}^{(2)},
\]
which implies $\bw_0\times\bn_f=0$ on each face $f$ so that $\bw_0=0$. Next, it follows from (\ref{wwf}) that
\begin{eqnarray*}
\sum_f (w_f\bn_f,\nabla_{w,f}\times \bv)_f=\sum_e([w_f], v_e)_e=0,
\end{eqnarray*}
which implies the jump $[w_f]_e=0$ on all edges. Thus, $w_f$ assumes a constant value at all faces $f\in F(T)$. This shows that the function $\bw\in W$ have the form $\bw=\{0,  c\bn_f\}$ with $c=const$ so that $\dim (W)=1$. This verifies the claim (\ref{r(curl)}).
\end{proof}

\begin{theorem}
For $k=0$,  the following de Rham complex is exact on cubic element $T$:
 \begin{equation} \label{c0c} 
\begin{split}\hbox{
\begin{tikzpicture}[scale=1.4]
 \node (AA) at (-1,1) {${\mathbb R}^1$};
 \node (A) at (0,1) {$C^\infty(T)$};
 \node (B) at (1.7,1) {$[{C}^\infty(T)]^3$};
 \node (C) at (3.4,1) {$[{C}^\infty(T)]^3$};
 \node (D) at (5.1,1) {$C^\infty(T)$};
 \node (I) at (6.0,1) {$0$};
 \node (EE) at (-1,0)   {${\mathbb R}^8$};
 \node (E) at (0,0)   {$V_{k}^{(1)}(T)$};
 \node (F) at (1.7,0) {$V_{k}^{(2)}(T)$};
 \node (G) at (3.4,0) {$V_{k}^{(3)}(T)$};
 \node (H) at (5.1,0) {$V_{k}^{(4)}(T)$};
 \node (II) at (6.0,0) {$0$};
 \draw[->] (AA) to node [above]{$I$}(A);
 \draw[->] (A) to node [above]{$\nabla$}(B);
 \draw[->] (B) to node [above]{$\nabla\times$}(C);
 \draw[->] (C) to node [above]{$\nabla\cdot$}(D);
 \draw[->] (D) to node [above]{$N$}(I);
 \draw[->] (EE) to node [above]{$I_w$}(E);
 \draw[->] (E) to node [above]{$\nabla_w$}(F);
 \draw[->] (F) to node [above]{$\nabla_w\times$}(G);
 \draw[->] (G) to node [above]{$\nabla_w\cdot$}(H);
 \draw[->] (A) to node [left]{$Q_h^{(1)}$}(E);
 \draw[->] (B) to node [left]{$Q_h^{(2)}$}(F);
 \draw[->] (C) to node [left]{$Q_h^{(3)}$}(G);
 \draw[->] (D) to node [left]{$Q_h^{(4)}$}(H);
 \draw[->] (H) to node [above]{$N$}(II);
 \end{tikzpicture}
}
\end{split}
\end{equation}
\end{theorem}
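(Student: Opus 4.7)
The plan is to follow the tetrahedral proof step by step, replacing each dimension count with its cubic analogue and re-doing the combinatorics imposed by square rather than triangular faces.

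First I would verify the Euler-type identity $\sum_{i=0}^{4}(-1)^i\dim V_0^{(i)}(T)=0$. A cube has $8$ vertices, $12$ edges, and $6$ faces, so $\dim V_0^{(1)}=1+6+12+8=27$, $\dim V_0^{(2)}=3+12+12=27$, $\dim V_0^{(3)}=3+6=9$, and $\dim V_0^{(4)}=1$; together with $V_0^{(0)}(T)=\mathbb{R}^8$ this gives $8-27+27-9+1=0$. Next I would identify $\kernel(\nabla_w)$. As in the tetrahedral proof, $\nabla_{w,e}v=0$ forces $v_n$ to agree at the two endpoints of every edge, so $v_n$ is a single constant. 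Testing $\nabla_{w,f}v=0$ on a square face against the two-dimensional space of constant tangential fields $\theta\in V_{0,f}^{(2)}(f)$ collapses \eqref{wgf} to $(v_e,\theta\cdot\bt_{\partial f})_{\partial f}=0$, which on a square forces $v_e$ on opposite edges to coincide; propagating this relation across faces, the $12$ edges partition into $3$ direction classes, so $v_e$ contributes $3$ free parameters. Testing $\nabla_{w,0}v=0$ against constant $\varphi\in[P_0(T)]^3$ reduces \eqref{wg0} to $\sum_f v_f(\varphi\cdot\bn_f)|f|=0$, which forces $v_f$ to coincide on each pair of opposite faces and contributes $3$ further parameters. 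Since $v_0$ is unconstrained, $\dim\kernel(\nabla_w)=1+3+3+1=8$, so $\range(I_w)=\kernel(\nabla_w)$ and $\dim\range(\nabla_w)=27-8=19$.

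A direct computation from \eqref{wd} gives $\dim\range(\nabla_w\cdot)=1$ (choose $w_f$ with $\sum_f w_f|f|\neq 0$), hence $\dim\kernel(\nabla_w\cdot)=9-1=8$. The key remaining claim is $\dim\range(\nabla_w\times)=8$, which I would establish by the orthogonal-complement argument of the tetrahedral proof: decompose $V_0^{(3)}(T)=\range(\nabla_w\times)\oplus W$ and take any $\bw=\{\bw_0,w_f\bn_f\}\in W$. Using \eqref{wc0} with a test function whose only non-zero component is $\bv_f$ on a single face shows $\bw_0\times\bn_f=0$ for every face, and since the cube has normals in three linearly independent directions, $\bw_0=0$. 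Using \eqref{wcf} with a test function whose only non-zero component is $\bv_e$ reorganizes into $\sum_e(w_{f_1(e)}-w_{f_2(e)})\int_e\bv_e\cdot\bt_e=0$, so $w_f$ agrees on any two faces sharing an edge; the face-adjacency graph of the cube being connected, all $w_f$ coincide and $\dim W=1$. Combining $\dim\range(\nabla_w\times)=8$ with \eqref{d3d2} yields $\range(\nabla_w\times)=\kernel(\nabla_w\cdot)$, and $\dim\kernel(\nabla_w\times)=27-8=19=\dim\range(\nabla_w)$ together with \eqref{d2d1} gives $\range(\nabla_w)=\kernel(\nabla_w\times)$.

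The main obstacle I anticipate is not the orthogonality step---essentially identical to the tetrahedral one---but rather the careful enumeration of $\kernel(\nabla_w)$ on the cube. Unlike the tetrahedron, where each component of $v$ collapses to a single constant and $\mathbb{R}^4$ injects through the naive ``all constants'' map, the cubic kernel is genuinely larger: the ``opposite-edge'' constraint on a square face produces three disjoint cycles of edges (one per coordinate direction) and the volume constraint only forces equality on opposite face pairs, yielding the richer $3+3+1+1=8$-dimensional kernel that justifies the $\mathbb{R}^8$ appearing in \eqref{c0c}.
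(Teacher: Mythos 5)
Your proposal is correct and follows essentially the same route as the paper: verify the alternating-sum dimension identity, count $\dim\kernel(\nabla_w)=1+3+3+1=8$ via the parallel-edge and parallel-face identifications, and establish $\dim(\range(\nabla_w\times))=8$ through the orthogonal decomposition $V_0^{(3)}(T)=\range(\nabla_w\times)\oplus W$ with $\dim W=1$. The only difference is that you carry out the cube version of the $W$ argument explicitly (three independent face normals forcing $\bw_0=0$, connectivity of the face-adjacency graph forcing $w_f$ constant), whereas the paper simply cites the tetrahedral proof as applying without modification.
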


\begin{proof} Again, the necessary condition of zero difference of dimensions for exactness holds true, as it can be easily seen that
\begin{align*}
    &\quad \ \sum_{i=0}^4 (-1)^{i-1} \dim V_k^{(i)}(T) \\
& = -8 \\
    & \quad \ +  \frac{(k+1)(k+2)(k+3)}6 + \quad \ 6\frac{(k+1)(k+2)}2 + 12(k+1)+8\\
     &\quad \ - 3\frac{(k+1)(k+2)(k+3)}6 -2\cdot 6\frac{(k+1)(k+2)}2 - 12(k+1) \\
     &\quad \ + 3\frac{(k+1)(k+2)(k+3)}6  + \quad \ 6\frac{(k+1)(k+2)}2\\
    &\quad \  - \ \frac{(k+1)(k+2)(k+3)}6 =0,
\end{align*}
where we have set $V_k^{(0)}(T) = {\mathbb R}^8$.

Let us show that the kernel of the operator $\nabla_{w}$ has dimension $8$. In fact,  for any $v\in  V_0^{(1)}(T)\in\kernel(\nabla_w)$, we have
$$
0=\nabla_w v =\{\nabla_{w,0} v, \nabla_{w,f}v, \nabla_{w,e}v\}.
$$
From the definition \eqref{wge} for $\nabla_{w,e}v$, we see that $v_n=\alpha_8$ with a constant $\alpha_8$ at all eight vertices of $T$. On each face (rectangle) $f\in F(T)$, the condition of $\nabla_{w,f}v=0$ implies $v_e$ has the same value on any parallel edges so that $v_e$ has a total of 3 independent unknowns. Analogously, the condition of $\nabla_{w,0} v = 0$ implies that $v_f$ has the same value on any parallel faces so that $v_f$ has a total of 3 independent unknowns. With the one free unknown for $v_0$, we have a total of $8$ independent unknowns for functions in $\kernel (\nabla_w)$ so that
\begin{equation}\label{EQ:4-26:001}
\dim(\kernel (\nabla_w)) = 8,
\end{equation}
which leads to
$$
\range(I_w) = \kernel(\nabla_w).
$$
Since the dimension of $V_0^{(1)}(T)=27$, thus the dimension of $\range(\nabla_w)=27-8=19$.

Observe that the proof of \eqref{r(curl)} can be adopted without any modification to yield the following result:
\begin{equation}\label{r(curl)-new}
\dim (\range(\nabla_w\times))=8.
\end{equation}

Since  $\dim (V_0^{(4)})=1$ and $\dim (V_0^{(3)})=9$, we then have $\dim (\range(\nabla_w \cdot ))=1$ and $\dim(\kernel (\nabla_w\cdot))=8$. It follows from \eqref{r(curl)-new} that
$$
\range(\nabla_w\times) = \kernel (\nabla_w\cdot).
$$

Next, from $\dim (V_0^{(2)})= \dim(\range(\nabla_w\times)) + \dim(\kernel(\nabla_w\times))$ and \eqref{r(curl)-new}, we have
$$
\dim(\kernel(\nabla_w\times)) = \dim (V_0^{(2)}) - 8 = 27-8=19=\dim(\range(\nabla_w)).
$$
Hence, we have from \eqref{d2d1} that
$$
\kernel(\nabla_w\times) = \range(\nabla_w).
$$
\end{proof}

\end{document}